\newtheorem{theorem}{Theorem}[section]
\newtheorem{proposition}[theorem]%
{Proposition}
\newtheorem{definition}{Definition}[section]
\newtheorem{lemma}[theorem]%
{Lemma}
\newtheorem{corollary}[theorem]%
{Corollary}
\newtheorem{example}{Example}[section]
\begin{document}

\title{Remoteness of graphs with given size and connectivity constraints}

\author[1]{Peter Dankelmann\thanks{Financial support by the South Africa National Research Foundation is greatly acknowledged.}}
\author[1,2]{Sonwabile Mafunda}
\author[1]{Sufiyan Mallu\thanks{The results of this paper form part of this authors PhD thesis.}
\thanks{Financial support by the South Africa National Research Foundation is greatly acknowledged.}}
\affil[1]{University of Johannesburg\\
South Africa}
\affil[2]{Soka University of America\\
USA}


\maketitle

\begin{abstract}
Let $G$ be a finite, simple connected graph. The average distance of a vertex $v$ of $G$ is the arithmetic mean of the distances 
from $v$ to all other vertices of $G$. The remoteness $\rho(G)$ of $G$ 
is the maximum of the average distances of the vertices of $G$.

In this paper, we give sharp upper bounds on the remoteness of a graph of given order, 
connectivity and size. We also obtain corresponding bound s for $2$-edge-connected 
and $3$-edge-connected graphs, and bounds in terms of order and size for triangle-free graphs.
\end{abstract}

Keywords: Remoteness; transmission; average distance; size; vertex-connectivity, edge-connectivity \\[5mm]
MSC-class: 05C12

\section{Introduction}
In this paper we consider finite, connected graphs with no loops or multiple edges. In a graph $G$ of order at least two, the average distance $\bar{\sigma}_G(v)$ of a vertex is defined to be the arithmetic mean of the distances from $v$ to all other vertices of $G$, i.e., $\overline{\sigma}_G(v) = \frac{1}{n-1} \sum_{w \in V} d_G(v,w)$, where the distance $d_G(v,w)$ denotes the usual shortest path distance. The remoteness $\rho(G)$ of a connected graph $G$ is defined as the maximum of the average distances of the vertices of $G$,
i.e. 
\[\rho(G)={\rm max}_{v\in V}\bar{\sigma}_G(v)\] where $V$ is the vertex set of $G$.\\
The term remoteness was first used in a paper on automated comparison of graph invariants \cite{AouCapHan2007}, and is in wide use nowadays. However, the remoteness of graphs and closely related concepts had been studied before under different names.  
Zelinka  [29] studied the {\em vertex deviation}, defined as $\frac{\sigma_G(v)}{n}$, 
where $\sigma_G(v)$ denotes the sum of the distances between $v$ and all other vertices,
and $n$ is the number of vertices. 
Other authors used terms such as {\em transmission}, for example, \cite{Ple1984}, {\em total distance} or simply {\em distance} for $\sigma_G(v)$ of $v$.\\ 

Bounds on remoteness in terms of order only were given by Zelinka \cite{Zel1968} and later, independently, by Auochiche and Hansen \cite{AouHan2011}. 

\begin{theorem}\label{Zel1968AouHan2011}
{\rm (Zelinka \cite{Zel1968}, Aouchiche, Hansen \cite{AouHan2011})} \\
Let $G$ be a connected graph of order $n\geq 2$. Then
\[ \rho(G) \leq \frac{n}{2}, \]
with equality if and only if $G$ is a path.
\end{theorem}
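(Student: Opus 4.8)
The plan is to pass from remoteness to transmissions: since $\rho(G) = \max_{v} \overline{\sigma}_G(v) = \max_v \frac{1}{n-1}\sigma_G(v)$, where $\sigma_G(v) = \sum_{w \in V} d_G(v,w)$, it suffices to prove $\sigma_G(v) \le \binom{n}{2}$ for every vertex $v$ and then divide by $n-1$. So I would fix an arbitrary $v$, let $e$ be its eccentricity (the largest distance from $v$), and partition $V$ into the distance layers $L_i = \{w : d_G(v,w) = i\}$ for $0 \le i \le e$, with $L_0 = \{v\}$. The key structural fact, which is exactly where connectivity enters, is that every layer $L_0, L_1, \dots, L_e$ is nonempty: any vertex at distance $i$ is joined to $v$ by a shortest path that meets each intermediate layer, so no distance value below $e$ can be skipped.

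From this I would run a short counting argument. Order the $n-1$ vertices other than $v$ as $w_1, w_2, \dots, w_{n-1}$ by nondecreasing distance from $v$. Setting $k := d_G(v,w_i)$, the nonemptiness of the layers $L_0, \dots, L_{k-1}$ supplies $k$ vertices strictly closer to $v$ than $w_i$; discarding $v$ itself leaves $k-1$ such vertices, all of which precede $w_i$ in the ordering, so $k-1 \le i-1$, that is, $d_G(v,w_i) \le i$. Summing gives $\sigma_G(v) = \sum_{i=1}^{n-1} d_G(v,w_i) \le \sum_{i=1}^{n-1} i = \binom{n}{2}$, and hence $\overline{\sigma}_G(v) \le \tfrac{n}{2}$ for every $v$, which yields $\rho(G) \le \tfrac{n}{2}$.

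For the characterization of equality I would trace the inequality backwards. If $\rho(G) = \tfrac{n}{2}$ then some vertex $v$ satisfies $\sigma_G(v) = \binom{n}{2}$, forcing $d_G(v,w_i) = i$ for all $i$; equivalently every layer $L_i$ is a singleton, so there is exactly one vertex $v_i$ at each distance $i = 0,1,\dots,n-1$. Each $v_i$ with $i \ge 1$ then has its only possible neighbour in the previous layer equal to $v_{i-1}$, so $v_0 v_1 \cdots v_{n-1}$ is a spanning path; moreover any chord $v_i v_j$ with $j \ge i+2$ would give $d_G(v,v_j) \le i+1 < j$, contradicting $d_G(v,v_j)=j$, so no chords can occur and $G \cong P_n$. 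Conversely I would check directly that an endpoint of $P_n$ has average distance $\frac{1}{n-1}\binom{n}{2} = \tfrac{n}{2}$ and that this endpoint is where the maximum average distance is attained, so $\rho(P_n) = \tfrac{n}{2}$.

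I expect the inequality itself to be routine once the layer observation is in place; the main obstacle is the equality analysis. The delicate points are arguing that the singleton-layer structure forbids \emph{any} additional edge (so the extremal graph is exactly the path and nothing denser), and, in the converse direction, confirming that the remoteness of $P_n$ is genuinely realised at an endpoint rather than somewhere in the interior.
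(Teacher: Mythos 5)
Your proof is correct. Note, however, that the paper does not prove this statement at all: it is quoted as a known result of Zelinka and of Aouchiche--Hansen (Theorem~\ref{Zel1968AouHan2011} carries only a citation), so there is no in-paper argument to compare yours against. Judged on its own, your argument is sound and complete: the layer decomposition $N_i(v)$, the observation that no layer below the eccentricity is empty, the resulting bound $d_G(v,w_i)\le i$ after sorting by distance, and the equality analysis (singleton layers force the spanning path $v_0v_1\cdots v_{n-1}$, and any chord $v_iv_j$ with $j\ge i+2$ would contradict $d_G(v,v_j)=j$) are all valid. It is also, in spirit, the same layer technique the paper does use for its new results: the proofs of Theorem~\ref{th:k-rho} and Theorem~\ref{theo:3-edge-conn} work with exactly these distance levels $N_i$ and their cardinalities $n_i$, so your proof fits naturally alongside them. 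One small simplification: your final ``delicate point'' about where the remoteness of $P_n$ is attained is vacuous --- once $\rho(G)\le \frac{n}{2}$ is proved for all connected graphs, exhibiting the endpoint of $P_n$ with average distance $\frac{1}{n-1}\binom{n}{2}=\frac{n}{2}$ already forces $\rho(P_n)=\frac{n}{2}$, since the maximum over vertices can be neither smaller than this value nor larger than $\frac{n}{2}$; you never need to compare the endpoint with interior vertices.
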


In \cite{EntJacSny1976}, Entringer, Jackson and Snyder considered results by Ore \cite{Ore1968} to strengthen the results in Theorem \ref{Zel1968AouHan2011} by taking into account a bound on the size of a graph. 

\begin{theorem}\label{EntJacSny1976}
{\rm (Entringer, Jackson, Snyder \cite{EntJacSny1976})}\\
Let $G$ be a connected graph of order $n$ and size at least $m$. Then
\[\rho(G) \; \leq \;\dfrac{n+2}{2}-\dfrac{m}{n-1}.\]
\end{theorem}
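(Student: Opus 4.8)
The plan is to fix a vertex $v$ attaining the remoteness, so that $\rho(G)=\bar{\sigma}_G(v)=\tfrac{1}{n-1}\sigma_G(v)$, and to bound its transmission $\sigma_G(v)=\sum_{w}d_G(v,w)$ by exploiting the distance layers around $v$. First I would record the reduction that lets me replace the hypothesis ``size at least $m$'' by the \emph{exact} size: the right-hand side $\tfrac{n+2}{2}-\tfrac{m}{n-1}$ is decreasing in $m$, so it suffices to prove the inequality with $m=|E(G)|$. Since $\tfrac{n^2+n-2}{2}=\binom{n}{2}+(n-1)$ and $\bar{\sigma}_G(v)=\tfrac{1}{n-1}\sigma_G(v)$, the whole theorem is then equivalent to the single transmission-plus-size estimate
\[
\sigma_G(v)+|E(G)|\;\le\;\binom{n}{2}+(n-1).
\]

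For the layers, write $n_i=|\{w:d_G(v,w)=i\}|$ for $i=0,1,\dots,e$, where $e$ is the eccentricity of $v$; thus $n_0=1$, every $n_i\ge 1$, $\sum_i n_i=n$, and $\sigma_G(v)=\sum_{i=0}^{e} i\,n_i$. The key structural observation is that in any graph the distance layers of $v$ form a breadth-first partition, so every edge joins two vertices within one layer or within two consecutive layers, and no edge can skip a layer. Counting pairs, this gives
\[
|E(G)|\;\le\;\sum_{i=0}^{e}\binom{n_i}{2}+\sum_{i=1}^{e}n_{i-1}n_i\;=\;\binom{n}{2}-\sum_{\substack{0\le i<j\le e\\ j-i\ge 2}}n_i n_j,
\]
where the equality merely subtracts from $\binom{n}{2}$ the pairs lying in non-consecutive layers.

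Substituting this into the target estimate reduces everything to the numerical inequality
\[
\sum_{j=1}^{e} j\,n_j-\sum_{j=2}^{e} n_j\sum_{i=0}^{j-2}n_i\;\le\;n-1,
\]
and this is where the only genuine idea enters, albeit a soft one: since each layer is nonempty, $\sum_{i=0}^{j-2}n_i\ge j-1$ for every $j\ge 2$, so replacing the inner sum by $j-1$ (which is subtracted) can only increase the left-hand side. After this substitution the two sums telescope, leaving $\sum_{j\ge 1} j\,n_j-\sum_{j\ge 1}(j-1)n_j=\sum_{j\ge 1}n_j=n-1$, exactly the required bound. Chaining the inequalities and dividing by $n-1$ yields $\rho(G)\le \tfrac{n+2}{2}-\tfrac{|E(G)|}{n-1}\le \tfrac{n+2}{2}-\tfrac{m}{n-1}$. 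I expect no serious obstacle here; the work is almost entirely in setting up the edge count cleanly and in the index bookkeeping, and one should separately track when equality holds (all layers except possibly the last two being singletons) to match extremal examples such as the path and the complete graph.
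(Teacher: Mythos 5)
Your proof is correct, and it takes a genuinely different route from the paper's. The paper never argues with this inequality directly: it obtains the statement (as Corollary \ref{coro:extremal-graphs-for-Entringer-Jackson}) from the general $\kappa$-connected result, Theorem \ref{th:k-rho}, whose proof is an extremal argument --- among all connected $(n,m)$-graphs choose $G$ maximizing $\rho$, and among those one of maximum size; one then shows the distance layers $N_0,\dots,N_d$ from a remote vertex induce complete graphs, consecutive layers are completely joined, and all intermediate layers are singletons, so $G$ is a path-complete graph $K_1+[K_1]^{\ell}+K_a+K_b$, whose remoteness is evaluated explicitly. You instead bound an arbitrary graph directly: the breadth-first observation that no edge skips a layer gives $|E(G)|\le\binom{n}{2}-\sum_{j-i\ge 2}n_in_j$, and the estimate $\sum_{i=0}^{j-2}n_i\ge j-1$ (each layer nonempty) makes the sums telescope to the equivalent form $\sigma_G(v)+|E(G)|\le\binom{n}{2}+(n-1)$; I checked the algebra, including the initial monotonicity reduction to $m=|E(G)|$, and it is sound. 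Your route is shorter and self-contained, close in spirit to the original Entringer--Jackson--Snyder proof that the paper calls ``elegant and short'' but does not reproduce; what the paper's heavier machinery buys is the characterization of the extremal graphs ($G=PK_{n,m,1}$) and the generalization to $\kappa$-connected and $\lambda$-edge-connected graphs, precisely the directions in which your replacement of $\sum_{i\le j-2}n_i$ by $j-1$ discards too much information. Your closing remark on equality (all layers but the last two singletons, together with every intra-layer and consecutive-layer pair being an edge) does point at the path-complete structure, but as written it is a sketch rather than a proof of the equality case.
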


The proof of Theorem \ref{EntJacSny1976} given in \cite{EntJacSny1976} is elegant and short,
but it neither yields the extremal graphs, nor does it generalise in a natural way, 
for example to
graphs of higher connectivity. Using an entirely different approach, we show that the bound in 
Theorem \ref{EntJacSny1976} can be strengthened considerably for $\kappa$-connected 
graphs, where $\kappa$ is arbitrary and for $\lambda$-edge-connected graphs, 
where $\lambda \in \{2,3\}$.  We also characterise all 
extremal graphs for $\kappa=1$. Using a proof similar to that of Theorem \ref{EntJacSny1976} 
we also obtain an improved bound for triangle-free graphs, which we show to be sharp.

The literature contains several results on remoteness of graphs, ranging from bounds 
on remoteness of different classes of graphs to the relation between remoteness and other 
graph parameters. There are results on remoteness in maximal planar graphs and maximal bipartite planar graphs  \cite{CzaDanOlsSze2021}, in graphs not containing certain cycles as 
\cite{DanJonMaf2021}, and trees \cite{BarEntSze1997, Zel1968}. Also relations between 
remoteness and other graph parameters have been studied, for example girth \cite{AouHan2017},  
minimum degree see \cite{Dan2015}, maximum degree see \cite{DanMafMal2022} and clique 
number see \cite{HuaDas2014}. A survey on proximity and remoteness in graphs has been given in \cite{Aou2024}. Recently, 
the bound in Theorem \ref{Zel1968AouHan2011} was extended to digraphs by Ai, Gerke, Gutin 
and Mafunda \cite{AiGerGutMaf2021}.

\begin{theorem}\label{AiGerGutMaf2021}
{\rm (Ai, Gerke, Gutin, Mafunda \cite{AiGerGutMaf2021})}\\
Let $D$ be a strong digraph of order $n\geq 3$. Then 
\[\rho(D)\leq \frac{n}{2},\]
with equality if and only if $D$ is strong and contains a Hamiltonian dipath 
$v_1v_2\dots v_n$ such that no directed edge of the form
$v_iv_j$ with $2\le i+1<j\le n$ is in $D$.
\end{theorem}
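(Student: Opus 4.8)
The plan is to adapt the distance-layering argument underlying Theorem~\ref{Zel1968AouHan2011} to the directed setting, exploiting the fact that a shortest out-dipath behaves, layer by layer, just like a shortest path in an undirected graph. For the \emph{upper bound}, fix a vertex $v$ and, for $k\ge 1$, set $N_k=|\{w\in V(D):d_D(v,w)\ge k\}|$. The key observation is that if some vertex lies at out-distance $k$ from $v$, then a shortest $(v,w)$-dipath of length $k$ meets a vertex at each of the out-distances $1,2,\dots,k-1$; hence there is at least one vertex at every out-distance between $1$ and the out-eccentricity of $v$. Consequently at least $k$ vertices (including $v$ itself) lie at out-distance at most $k-1$, so $N_k\le n-k$ for $1\le k\le n-1$ and $N_k=0$ for $k\ge n$. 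Summing, $\sigma_D(v)=\sum_{k\ge 1}N_k\le\sum_{k=1}^{n-1}(n-k)=\binom{n}{2}$, and therefore $\bar\sigma_D(v)\le\tfrac{n}{2}$ for every $v$, which gives $\rho(D)\le\tfrac{n}{2}$.

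For \emph{sufficiency}, suppose $D$ is strong and admits a Hamiltonian dipath $v_1v_2\cdots v_n$ containing no forward arc $v_iv_j$ with $j\ge i+2$. I would show that $d_D(v_1,v_j)=j-1$ for every $j$. The dipath gives $d_D(v_1,v_j)\le j-1$; conversely, since no forward-skipping arc is present, every arc of $D$ either advances the index by exactly one (a path arc $v_iv_{i+1}$) or decreases it (a back arc), so any dipath from $v_1$ to $v_j$ uses at least $j-1$ advancing arcs and thus has length at least $j-1$. Hence $\sigma_D(v_1)=\sum_{j=2}^{n}(j-1)=\binom{n}{2}$ and $\bar\sigma_D(v_1)=\tfrac{n}{2}$, which combined with the upper bound yields $\rho(D)=\tfrac{n}{2}$.

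For \emph{necessity}, assume $\rho(D)=\tfrac{n}{2}$ and pick $v$ with $\bar\sigma_D(v)=\tfrac{n}{2}$, that is $\sigma_D(v)=\binom{n}{2}$. Then every inequality $N_k\le n-k$ above is tight, which forces exactly one vertex at each out-distance $1,2,\dots,n-1$ from $v$. Writing $v=v_1$ and letting $v_k$ be the unique vertex with $d_D(v_1,v_k)=k-1$, a shortest $(v_1,v_{k+1})$-dipath has its penultimate vertex at out-distance $k-1$, which must be $v_k$; hence each arc $v_kv_{k+1}$ is present and $v_1v_2\cdots v_n$ is a Hamiltonian dipath. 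Finally, an arc $v_iv_j$ with $j\ge i+2$ would give $d_D(v_1,v_j)\le(i-1)+1=i<j-1$, contradicting $d_D(v_1,v_j)=j-1$, so no forward-skipping arc exists, completing the characterization.

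The genuinely directed content is confined to two checks, and I expect these to be the main obstacle. The first is that the layering property survives the loss of symmetry; it does, because shortest \emph{out}-dipaths still pass through one vertex of each intermediate out-distance. The second is that, in the sufficiency direction, the back arcs needed to keep $D$ strong cannot create shortcuts to later vertices along the path ordering. The most delicate bookkeeping will lie in the equality analysis: extracting from $\sigma_D(v)=\binom{n}{2}$ the uniqueness of the vertex in each out-distance layer, and then simultaneously reconstructing the Hamiltonian dipath and ruling out every forward-skipping arc.
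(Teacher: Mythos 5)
The paper does not actually prove this statement: Theorem~\ref{AiGerGutMaf2021} is quoted as background and attributed to the cited work of Ai, Gerke, Gutin and Mafunda, so there is no in-paper proof to compare yours against. Judged on its own merits, your argument is correct and complete.

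Each of the three pieces checks out. For the upper bound, the identity $\sigma_D(v)=\sum_{k\ge 1}N_k$ and the estimate $N_k\le n-k$ are both valid, because on a shortest out-dipath $v=u_0u_1\cdots u_k=w$ every intermediate vertex $u_i$ satisfies $d_D(v,u_i)=i$ (a shorter dipath to $u_i$ concatenated with the suffix would yield a shorter $v$--$w$ walk), so every out-distance up to the out-eccentricity of $v$ is realised; this is precisely the point where the loss of symmetry could have hurt, and you address it correctly. For sufficiency, the observation that in the absence of arcs $v_iv_j$ with $j\ge i+2$ every arc increases the path index by at most one gives $d_D(v_1,v_j)\ge j-1$, hence $\sigma_D(v_1)=\binom{n}{2}$, and the back arcs that make $D$ strong cannot interfere. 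For necessity, tightness of all the inequalities $N_k\le n-k$ forces exactly one vertex in each out-distance layer; the penultimate vertex of a shortest $(v_1,v_{k+1})$-dipath lies at out-distance $k-1$ and so must be $v_k$, which recovers the Hamiltonian dipath, and any forward-skipping arc $v_iv_j$ would give $d_D(v_1,v_j)\le i<j-1$, a contradiction. One could remark that your layer-counting argument simultaneously reproves the undirected bound of Theorem~\ref{Zel1968AouHan2011}, which the paper likewise only cites.
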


\section{Terminology and notation}
We use the following notation. 
For a graph $G$ we denote by $V(G)$ and $E(G)$ the {\em vertex set} and {\em edge set}, respectively.  
The {\em order} and {\em size} of a graph are  the cardinalities of the vertex set and edges, respectively. 
By an $(n,m)$-graph we mean a graph of order $n$ and size at least $m$.
If $G$ and $G'$ are distinct graphs with the same vertex set, but $E(G) \subset E(G')$, i.e., $G'$ is obtained from $G$ by adding edges, then we write $G \lneqq G'$.

If $v$ is a vertex in a graph $G$, then the {\em neighbourhood} $N(v)$ of $v$ is the set of all 
vertices adjacent to $v$.
We will say a graph $G$ is {\em $\kappa$-connected} or {\em $\lambda$-edge-connected} if removal 
of fewer than $\kappa$ vertices or fewer than $\lambda$ edges, respectively, 
leaves $G$ connected.

The {\em eccentricity} ${\rm ecc}_G(v)$ of a vertex $v$ in a graph $G$ is the distance 
from $v$ to a vertex farthest from $v$. The largest of all
eccentricities of vertices of $G$ is called the {\em diameter} and is denoted by 
${\rm diam}(G)$. For $i\in \mathbb{Z}$ let $N_i(v)$ be 
the set of all vertices $u$ with $d(v,u)=i$, and let $n_i(v)=|N_i(v)|$. 
Clearly, $n_i(v) >0$ if and only if $0\leq i \leq {\rm ecc}_G(v)$. 

We denote the complete graph of order $n$ by $K_n$. 
If $G_1, G_2,\ldots,G_k$ are disjoint graphs, then the {\em sequential
sum} $G_1+G_2+\cdots G_k$ is the graph obtained from the disjoint
union of $G_1, G_2,\ldots, G_k$ by joining every vertex in $G_i$ to
every vertex in $G_{i+1}$ for $i=1,2,\ldots,k-1$.
If $t, k\in \mathbb{N}$, then $[K_{a_1} + K_{a_2}+\ldots+K_{a_t}]^k$ 
stands for $k$ repetitions of the pattern 
$K_{a_1} + K_{a_2}+\ldots+K_{a_t}$.
 If $G$ is a graph, then the
{\em complement} of $G$, denoted by $\overline{G}$, is the graph on
the same vertex set as $G$, in which two vertices are adjacent if
they are not adjacent in $G$. 
We say that a graph $G$ is {\em triangle-free} if $G$ contains no subgraph 
isomorphic to $K_3$. 

If $(k,b)$ and $(k',b')$ are distinct pairs of integers, then we write $(k,b) \preceq (k',b')$ if 
$(k,b)$ comes before $(k',b')$ in the lexicographic ordering of pairs of integers, i.e., if
either $k<k'$ or $k=k'$ and $b< b'$. For triples of integers the relation $\preceq$ is defined
analogously.

\section{Maximum remoteness of a $\kappa$-connected graph with given order and size}\label{vertex-connectivity}

In this section we give a sharp upper bound on the remoteness of a $\kappa$-connected graph in terms of order and size. Our results generalize the bound on remoteness in connected graphs
of given order and size in Theorem \ref{EntJacSny1976}. Our proof is entirely different 
from that in \cite{EntJacSny1976}, this technique allows us to determine the extremal graphs in Theorem \ref{EntJacSny1976}, i.e., if $\kappa=1$, and also for many values of the 
size if $\kappa>1$.

We first describe a family of $\kappa$-connected graphs of order $n$ and size at least $m$, which we term $\kappa$-connected path-complete graphs. We prove some properties of
these graphs.   
We then show that path-complete graphs attain the maximum remoteness among all $\kappa$-connected graphs of order $n$ and size at least $m$. 

\begin{definition} \label{def:kappa-connected-path-complete}
A graph $G$ is said to be a $\kappa$-connected path-complete graph if there exist
$\ell, a, b \in \mathbb{N}$, $a \geq \kappa$, with  
\[ G = K_1 + [K_{\kappa}]^{\ell} + K_a + K_b.\]
\end{definition}

For graphs of diameter greater than $2$, Definition \ref{def:kappa-connected-path-complete}
generalises the path-complete graphs defined by Solt\'{e}s in \cite{Sol1991}, which 
are the $1$-connected path-complete graphs defined above.

\begin{lemma}   \label{la:size-vs-remoteness-in-kappa-connected-pc}
(a) Let $H$ be a $\kappa$-connected path-complete graph, where $\kappa \in \mathbb{N}$. Then 
$\rho(H+e) < \rho(H)$ for every edge $e \in E(\overline{H})$. \\
(b) Let $H, H'$ be two distinct $\kappa$-connected path-complete graphs of order $n$. 
Then either $m(H) < m(H')$ and $\rho(H) > \rho(H')$, or $m(H) > m(H')$ 
and $\rho(H) < \rho(H')$. \\
(c) Given $n, \kappa$. Then there exists a $\kappa$-connected path-complete graph of order 
$n$ and size $m$ if an only if 
$m \equiv \binom{n-1}{2} \pmod{\kappa}$ and
$\frac{1}{2} [n(3\kappa-1) - 2\kappa^2 - \kappa+1 - b(\kappa-b)] 
\leq m \leq \binom{n-1}{2}$, where $b$ is the integer in $\{1,2,\ldots, \kappa\}$ with
$b \equiv n-1 \pmod{\kappa}$.
\end{lemma}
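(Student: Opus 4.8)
The plan is to reduce all three parts to two facts about $H = K_1 + [K_\kappa]^\ell + K_a + K_b$: an explicit formula for the transmission of the vertex $v_0$ of the initial $K_1$, and the fact that the remoteness of $H$ is always attained at $v_0$. Label the blocks $B_0,\dots,B_D$ with $D=\ell+2$ and sizes $s_0=1$, $s_1=\dots=s_\ell=\kappa$, $s_{\ell+1}=a$, $s_{\ell+2}=b$; vertices in $B_i,B_j$ are at distance $|i-j|$ when $i\neq j$ and at distance $1$ when $i=j$, so a vertex of $B_k$ has transmission $h(k)=\sum_j s_j|k-j|+(s_k-1)$. First I would compute $h(k+1)-h(k)=2\sum_{i\le k}s_i-n+s_{k+1}-s_k$ and observe that the successive differences of these increments, taken across the four transition types, equal $1+\kappa$, $2\kappa$, $\kappa+a$ and $b+\kappa$, all positive. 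Hence $h$ is strictly convex on $\{0,\dots,D\}$ and is maximised only at the end blocks $B_0,B_D$. A short direct evaluation then gives $\sigma_H(v_0)-\sigma_H(x)=(\ell+1)(b-1)+\ell(a-\kappa)\ge 0$ for $x\in B_D$, so $\rho(H)=\sigma_H(v_0)/(n-1)$, the maximum being attained only inside $B_0\cup B_D$. The hard part will be this structural step; once it is in place, the remaining arguments are essentially algebraic.

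For (a), edge addition never increases a distance, so $\rho(H+e)\le\rho(H)$. For strictness, let $w^\ast$ attain $\rho(H+e)$ and use $\rho(H+e)=\sigma_{H+e}(w^\ast)/(n-1)\le \sigma_H(w^\ast)/(n-1)\le\rho(H)$. Writing $e=xy$ with $x\in B_i$, $y\in B_j$ and $j\ge i+2$: if $w^\ast\in B_0\cup B_D$ then $e$ strictly shortens the distance from $v_0$ to $y$ (from $j$ to at most $i+1$) and from any vertex of $B_D$ to $x$ (from $D-i$ to at most $D-j+1$), making the first inequality strict; if $w^\ast$ lies in an interior block then strict convexity gives $\sigma_H(w^\ast)<\sigma_H(v_0)=(n-1)\rho(H)$, making the second inequality strict. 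Either way $\rho(H+e)<\rho(H)$.

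For (c), I would substitute $a=n-1-\ell\kappa-b$ into the size and simplify to the single identity $m=\binom{n-1}{2}+\kappa\,B(\ell,b)$ with $B(\ell,b)=\tfrac12\kappa\ell(\ell+1)-\ell(n-1)+n-b-\kappa\in\mathbb{Z}$, which at once yields $m\equiv\binom{n-1}{2}\pmod\kappa$. For fixed $\ell$ the parameter $b$ runs over $1\le b\le b_{\max}(\ell):=n-1-(\ell+1)\kappa$ (so that $a\ge\kappa$), and since $b$ occurs in $B$ with coefficient $-1$ the values $B(\ell,b)$ fill the integer interval $[B(\ell,b_{\max}(\ell)),B(\ell,1)]$. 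The key computation is the telescoping identity $B(\ell+1,1)=B(\ell,b_{\max}(\ell))-1$ together with the fact that $B(\ell,b_{\max}(\ell))$ is strictly decreasing in $\ell$ on the admissible range; these show the intervals for successive $\ell$ tile contiguously and disjointly. Hence the attainable values of $B$ are exactly the integers from $0$ (at $\ell=b=1$, giving $m=\binom{n-1}{2}$) down to the value at the largest admissible $\ell$, where $a=\kappa$ and the last block has the size $b\in\{1,\dots,\kappa\}$ with $b\equiv n-1\pmod\kappa$; evaluating the size of $K_1+[K_\kappa]^{\ell+1}+K_b$ directly reproduces the stated lower bound $\tfrac12[n(3\kappa-1)-2\kappa^2-\kappa+1-b(\kappa-b)]$. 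The same tiling shows $(\ell,b)\mapsto m$ is injective, so distinct path-complete graphs of order $n$ have distinct sizes.

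For (b), combining the size identity with the companion formula $\sigma_H(v_0)=(n-1)(\ell+1)-\tfrac12\kappa\ell(\ell+1)+b$ gives $\kappa\,\sigma_H(v_0)+m=\kappa(2n-1-\kappa)+\binom{n-1}{2}$, a constant over the entire family. Since $\rho(H)=\sigma_H(v_0)/(n-1)$ and, by (c), distinct graphs of order $n$ have distinct sizes, $m(H)<m(H')$ forces $\sigma_H(v_0)>\sigma_{H'}(v_0)$ and hence $\rho(H)>\rho(H')$, which is precisely the assertion of (b).
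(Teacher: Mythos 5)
Your proposal is correct in substance but takes a genuinely different route from the paper, and it has one small slip in part (a) that needs patching (below). The paper's proof is structural: it \emph{asserts} that the remoteness of $H=K_1+[K_\kappa]^\ell+K_a+K_b$ is attained at the vertex of the initial $K_1$ (and also at $K_b$ exactly when $(a,b)=(\kappa,1)$), deduces (a) from the strict decrease of the transmission of these vertices, and then proves (b) and (c) by showing that any two distinct $\kappa$-connected path-complete graphs of the same order are comparable under the edge-addition order $\lneqq$, the comparison being realised by explicit chains of edge additions following the lexicographic order on $(\ell,b)$, each batch of added edges having size a multiple of $\kappa$. You instead compute everything: the convexity of the block-transmission function $h$ (second differences $s_k+s_{k+2}>0$) actually \emph{proves} the localisation of the remoteness that the paper only asserts; the identity $m=\binom{n-1}{2}+\kappa B(\ell,b)$ together with the tiling identity $B(\ell+1,1)=B(\ell,b_{\max}(\ell))-1$ gives (c) and the injectivity of $(\ell,b)\mapsto m$; and the conserved quantity $\kappa\,\sigma_H(v_0)+m(H)=\kappa(2n-1-\kappa)+\binom{n-1}{2}$ gives (b) at once, independently of (a). I checked your formulas ($h(0)-h(D)=\ell(a-\kappa)+(\ell+1)(b-1)$, the expression for $B$, the tiling identity, and the conserved quantity all verify). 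Your route buys more than the paper's: the linear relation between $\sigma_H(v_0)$ and $m$ immediately yields the evaluation of $\rho(PK_{n,m,\kappa})$ in Corollary~\ref{coro:evaluation-of-kappa-connected-path-complete}, which the paper proves by a separate calculation; the paper's $\lneqq$-chain technique, on the other hand, is the one that transfers with little change to the edge-connectivity setting of Section~\ref{edge-connectivity}.

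The slip: in part (a), in the case $w^\ast\in B_0\cup B_D$, you claim that $e=xy$ (with $x\in B_i$, $y\in B_j$, $j\ge i+2$) shortens the distance ``from any vertex of $B_D$ to $x$, from $D-i$ to at most $D-j+1$''. This fails when $j=D$ and $w^\ast\in B_D\setminus\{y\}$: there $d_H(w^\ast,y)=1$, not $D-j=0$, so you only get $d_{H+e}(w^\ast,x)\le 2$; indeed, for $i=D-2$ the addition of $e$ changes no distance from such a $w^\ast$ at all, so the first inequality need not be strict. The repair is already in your toolkit: $w^\ast\in B_D\setminus\{y\}$ with $j=D$ forces $b\ge 2$, hence $(a,b)\neq(\kappa,1)$, hence by your endpoint comparison $\sigma_H(w^\ast)=h(D)<h(0)=(n-1)\rho(H)$, so the \emph{second} inequality is strict in this sub-case. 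Amend the case split to ``$\sigma_H(w^\ast)=(n-1)\rho(H)$, i.e.\ $w^\ast=v_0$, or $w^\ast\in B_D$ with $(a,b)=(\kappa,1)$ (so $B_D=\{w^\ast\}$)'' versus ``otherwise'', and the argument closes.
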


\begin{proof}
(a) There exist $a,b,\kappa \in \mathbb{N}$ with $H = K_1 + [K_{\kappa}]^{\ell} + K_a + K_b$.
If $(a,b) \neq (\kappa,1)$, then the vertex in $K_1$ is the unique vertex attaining 
the remoteness. If $(a,b)=(\kappa,1)$, then the vertex in $K_1$ and the vertex in $K_b$
are the unique vertex attaining the remoteness. In both cases, adding an arbitrary edge
strictly reduces the total distance of these vertices, and the lemma holds. \\[1mm]
(b) Let $H$ and $H'$ be  two distinct $\kappa$-connected path-complete graphs of order $n$. 
It suffices to show that 
\begin{equation} \label{eq:either-H<H' or H'<H}
H' \lneqq H \quad \textrm{or} \quad H \lneqq H', 
\end{equation}
To see this observe that if $H' \lneqq H$, then $m(H') < m(H)$, and by part (a), we have $\rho(H') > \rho(H)$. Similarly, if $H \lneqq H'$, then 
$m(H) < m(H')$, and $\rho(H) > \rho(H')$. In both cases part (a) of the lemma holds.

To prove \eqref{eq:either-H<H' or H'<H}, note that 
there exist $k, k', a, a', b, b' \in \mathbb{N}$ with $a,a' \geq \kappa$ such that 
\[ H = K_1 + [K_{\kappa}]^{\ell} + K_{a} + K_{b}, \quad 
  H' = K_1 + [K_{\kappa}]^{\ell'} + K_{a'} + K_{b'}. \]  
Since $H \neq H'$, it follows that $(\ell,b)\neq (\ell',b')$. We have either  
$(\ell,b) \preceq (\ell',b')$ or $(\ell',b') \preceq (\ell,b)$. 
Without loss of generality we may assume the former. 
   
First assume that $\ell=\ell'$ and $b <b'$. Then $H$ is obtained from $H'$ by 
adding edges between all vertices of a set of $b'-b$ vertices of $K_{b'}$ and 
all vertices in the rightmost $K_{\kappa}$, and so we have $H' \lneqq H$. 
Now assume that $\ell < \ell'$. Then we obtain 
the graph $H$ from
$H'$ by adding edges as follows. By joining all vertices in the rightmost complete
graph $K_{b'}$ to all vertices in the third rightmost complete graph $K_{\kappa}$  
in $H'$ we obtain the graph $K_1 + [K_{\kappa}]^{\ell'} + K_{a'+b'}$. Applying this 
operation again yields the graph $K_1 + [K_{\kappa}]^{\ell'-1} + K_{a'+b'+\kappa}$. 
After repeating this operation a total of $\ell'-k$ times 
we obtain $K_1 + [K_{\kappa}]^{\ell} + K_{\kappa} +  K_{a'+b'+(\ell'-\ell-1)\kappa}$, 
which equals $K_1 + [K_{\kappa}]^{\ell} + K_{\kappa} + K_{a+b-\kappa}$. 
Adding edges between all vertices of a set of $a-\kappa$ vertices of $K_{a+b-\kappa}$ and 
all vertices in the rightmost $K_{\kappa}$ yields the graph 
$K_1 + [K_{\kappa}]^{\ell} + K_{a} + K_{b}$, 
which is $H$. Hence $H' \lneqq H$, and \eqref{eq:either-H<H' or H'<H} follows. 
Thus (b) holds. \\[1mm]
(c) Fix $n$ and $\kappa$. If for $\ell, a,b \in \mathbb{N}$ the graph  
$K_1 + [K_{\kappa}]^{\ell} + K_a + K_b$ has order $n$ and is $\kappa$-connected, then
$n=1 + \ell\kappa + a+b$, and 
$a \geq \kappa$. This implies that $\ell = \frac{n-1-a-b}{\kappa} \leq \frac{n-2-\kappa}{\kappa}$, and $b = n-1-\ell\kappa - a \leq n-1 - (\ell+1)\kappa$. 
With respect to the order $\preceq$, the smallest and largest pairs $(\ell,b)$ satisfying 
these conditions are $(1,1)$ and $(\ell_0, b_0)$, respectively, where 
$\ell_0=\lfloor \frac{n-2-\kappa}{\kappa} \rfloor$ and $b_0=n-1- (\ell_0+1)\kappa$. 
It thus follows as in the proof of \eqref{eq:either-H<H' or H'<H} that the 
$\kappa$-connected path-complete graph  $K_1 + K_{\kappa} + K_{n-\kappa-2} + K_1$,  
arising from the pair $(1,1)$, has maximum size among all $\kappa$-connected path-complete 
graphs of order $n$. Simple calculations show that its size is $\binom{n-1}{2}$. 
The $\kappa$-connected path-complete graph 
$K_1 + [K_{\kappa}]^{\ell_0} + K_{n-1-k\ell_0-b_0} + K_{b_0}$ arising from the pair
$(\ell_0,b_0)$ has minimum size among $\kappa$-connected path-complete 
graphs of order $n$. Its size is 
$\frac{1}{2} \big(n(3\kappa-1) - 2\kappa^2 - \kappa+1 - b(\kappa-b)\big)$.

The proof of part (a) shows that if $m(H) < m(H')$, then $H'$ is obtained from $H$
by adding edges, and the number of edges added is a multiple of $\kappa$. Hence the number
of edges of a $\kappa$-connected path-complete graph of order $n$ is congruent
$\binom{n-1}{2}$ modulo $\kappa$. If $H$ is a $\kappa$-connected path-complete 
graph of order $n$, $H = K_1 + [K_{\kappa}]^{\ell} + K_a + K_b$,
then unless $(\ell,b)=(1,1)$, there exists a $\kappa$-connected path-complete graph 
of order $n$ with exactly $\kappa$ more edges than $H$: 
the graph $K_1 + [K_{\kappa}]^{\ell} + K_{a+1} + K_{b-1}$ (if $b>1$) or the 
graph $K_1 + [K_{\kappa}]^{\ell-1} + K_{\kappa} + K_{a+1}$ (if $\ell>1$ and $b=1$). 
This completes the proof of part (c).
\end{proof}

For given $n,m \in \mathbb{N}$ for which there exists a $\kappa$-connected
path-complete $(n,m)$-graph, we define $PK_{n,m,\kappa}$ to be such a graph
of minimum size. It follows from Lemma \ref{la:size-vs-remoteness-in-kappa-connected-pc}(a) 
that there exists at most one $\kappa$-connected path-complete graph of given order 
and size, so $PK_{n,m,\kappa}$ is well-defined. 


\begin{theorem}\label{th:k-rho}	
(a) Let $G$ be a $\kappa$-connected $(n,m)$-graph  with $m \leq \binom{n-1}{2}$. Then 
\begin{equation} \label{eq:bound-on-rho-for-kappa}
\rho(G) \leq \rho (PK_{n,m,\kappa}).
\end{equation}
(b)  Assume that $m \equiv \binom{n-1}{2} \pmod{\kappa}$ and 
$\frac{1}{2} \big(n(3\kappa-1) - 2\kappa^2 - \kappa+1 - b(\kappa-b)\big) 
\leq m  \leq \binom{n-1}{2}$, where $b$ is the integer in $\{1,2,\ldots,\kappa\}$ with
$b \equiv n-1 \pmod{\kappa}$. 
Then equality in (a) holds only if $G= PK_{n,m,\kappa}$. 
\end{theorem}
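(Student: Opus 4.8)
The plan is to prove Theorem~\ref{th:k-rho} by an extremal/optimization argument: among all $\kappa$-connected $(n,m)$-graphs, take one $G$ of maximum remoteness, and show that it can be transformed into $PK_{n,m,\kappa}$ without decreasing $\rho$. The key organizing tool will be the distance-distribution sequence $(n_i(v))_{i\ge 0}$ for a vertex $v$ attaining the remoteness, since $\rho(G) = \bar\sigma_G(v) = \frac{1}{n-1}\sum_i i\, n_i(v)$. First I would fix such a vertex $v$ and note that because $G$ is $\kappa$-connected, every distance layer $N_i(v)$ with $0 < i < \mathrm{ecc}_G(v)$ must contain at least $\kappa$ vertices: otherwise the fewer-than-$\kappa$ vertices of that layer would form a cutset separating the nearer layers from the farther ones, contradicting $\kappa$-connectivity. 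This is the structural observation that forces the $[K_\kappa]^\ell$ pattern and makes the path-complete graph the right candidate.

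Next I would set up the comparison with $PK_{n,m,\kappa}$. The natural strategy is a ``shifting'' argument: given $G$ with its layer sizes $n_0,n_1,\ldots$, I would compare against the layer sizes of the path-complete graph, which are $1,\kappa,\kappa,\ldots,\kappa,a,b$ reading from $v$ outward in the appropriate direction (the single vertex $K_1$ being the remote vertex, then a run of $K_\kappa$'s, then the two large blocks). The aim is to show that, subject to the order constraint $\sum_i n_i = n$ and a size constraint, the weighted sum $\sum_i i\, n_i$ is maximized exactly when the sequence is as ``front-loaded'' as possible into small early layers, i.e.\ when each intermediate layer is pushed down to its minimum size $\kappa$ and the surplus vertices are concentrated in the last two layers. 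I would make this precise by a local exchange: if some intermediate layer has more than $\kappa$ vertices while the graph is not yet path-complete, one can move a vertex to a farther layer (or add/reroute edges) in a way that does not decrease the total distance from $v$ and keeps $\kappa$-connectivity, decreasing size or leaving it controlled. One must track the size carefully here, because the bound is stated for a fixed size $m$, so every transformation has to respect or be compensated in the edge count; this is where the relation between size and the block sizes $a,b,\ell$ worked out in Lemma~\ref{la:size-vs-remoteness-in-kappa-connected-pc} becomes the accounting backbone.

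For part~(b), the uniqueness/characterization, I would invoke Lemma~\ref{la:size-vs-remoteness-in-kappa-connected-pc} directly. Under the stated hypotheses on $m$ (the congruence $m\equiv\binom{n-1}{2}\pmod\kappa$ and $m$ in the admissible range), part~(c) of that lemma guarantees a $\kappa$-connected path-complete graph of order $n$ and size \emph{exactly} $m$ exists, and part~(a)/(b) guarantee it is unique and is the maximizer. So the argument reduces to showing that equality in~\eqref{eq:bound-on-rho-for-kappa} forces $G$ itself to be path-complete: I would trace through the extremal analysis from part~(a) and argue that every inequality used is strict unless each intermediate layer has exactly $\kappa$ vertices, each layer induces a complete graph, and consecutive layers are completely joined---i.e.\ unless $G$ has precisely the sequential-join-of-cliques structure $K_1 + [K_\kappa]^\ell + K_a + K_b$. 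Since $PK_{n,m,\kappa}$ is the minimum-size such graph and our $G$ has size at least $m$ equal to that minimum admissible size, the block sizes are pinned down and $G = PK_{n,m,\kappa}$.

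The main obstacle I anticipate is the size-bookkeeping in the shifting step of part~(a). Maximizing $\sum_i i\,n_i$ over layer sequences is straightforward in isolation, but the requirement that the realizing graph be simultaneously $\kappa$-connected \emph{and} of size at most the available $m$ (or exactly $m$ in part~(b)) couples the combinatorial shift to a precise edge count. The delicate point is showing that when we shift a vertex outward to increase total distance, we can always do so while keeping $\kappa$-connectivity and without being forced to add so many edges that the size constraint is violated---equivalently, that the extremal graph uses its edge budget as efficiently as the path-complete graph does. Establishing that the path-complete block structure is exactly the edge-efficient configuration, so that no competitor can spend fewer edges to achieve the same or larger total distance, will be the crux of the proof.
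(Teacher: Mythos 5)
Your outline does follow the same strategy as the paper (extremal graph, distance layers from a remote vertex $v$, layers of size at least $\kappa$, outward shifting moves, and Lemma~\ref{la:size-vs-remoteness-in-kappa-connected-pc} for the final comparison), but it leaves unresolved exactly the point you yourself flag as the crux, and the resolution requires two devices your proposal does not supply. First, the paper does not only take $G$ of maximum remoteness: among all remoteness-maximizing $\kappa$-connected $(n,m)$-graphs it takes one of \emph{maximum size}. Since an $(n,m)$-graph is by definition a graph of size \emph{at least} $m$ (not exactly $m$), the class is closed under adding edges, and this secondary maximality immediately forces $G=K_{n_0}+K_{n_1}+\cdots+K_{n_d}$ (any missing edge inside a layer or between consecutive layers could be added without changing any distance from $v$). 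Your worry about ``being forced to add so many edges that the size constraint is violated'' has the inequality backwards: adding edges can never leave the class; the only danger is that a shifting move \emph{decreases} the size below $m$, and since one only knows $m(G)\geq m$, the moves must not decrease size at all. Second, the paper kills that danger by an exact edge count: choosing $j$ to be the \emph{smallest} index with $n_j>\kappa$, moving one vertex from layer $j$ to layer $j+1$ changes the size by exactly $n_{j+2}-n_{j-1}\geq 0$, because $n_{j-1}=\kappa$ (or $1$ if $j=1$) while $n_{j+2}\geq\kappa$; this move strictly increases $\sigma(v,G)$, contradicting remoteness-maximality. The layer $d-2$ needs a different move (shift one vertex from $N_{d-2}$ and one from $N_d$ into $N_{d-1}$), which leaves $\rho$ unchanged but strictly increases the size, so there the contradiction is with the secondary size-maximality --- that assumption is essential, not cosmetic.

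Your part (b) has a parallel gap: ``tracing through part (a) and checking strictness'' cannot be applied directly to an arbitrary graph attaining equality in \eqref{eq:bound-on-rho-for-kappa}, because Claims of part (a) about the structure of $G$ rely on size-maximality, which equality of remoteness alone does not give you. The paper first proves that an equality-attaining $G$ is also size-maximal among remoteness maximizers: otherwise there is a $\kappa$-connected $(n,m+1)$-graph $G''$ with $\rho(G'')=\rho(G)$, and applying part (a) to $G''$ together with the strict monotonicity in Lemma~\ref{la:size-vs-remoteness-in-kappa-connected-pc}(b) and part (c) (which, under the congruence hypothesis, guarantees $m(PK_{n,m,\kappa})=m$) yields $\rho(G)\leq\rho(PK_{n,m+1,\kappa})<\rho(PK_{n,m,\kappa})$, a contradiction. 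Only after this step does the analysis of part (a) apply to $G$ itself, giving $G=PK_{n,m',\kappa}$ for some $m'\geq m$, and anti-monotonicity forces $m'=m$. So the skeleton you propose is the right one, but the proof stands or falls on the double-extremality device and on the exact edge count of each move, neither of which appears in your proposal.
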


\begin{proof}
(a) We first prove that
there exists a $\kappa$-connected path-complete $(n,m)$-graph $G'$ with 
\begin{equation} \label{eq:exists-pc-graph-kappa}
\rho(G) \leq \rho(G'). 
\end{equation}
We may assume that $G$ has  maximum remoteness among 
all $\kappa$-connected $(n,m)$-graphs, and that among all such graphs of maximum remoteness, 
$G$ is one of maximum size. Furthermore, let $v \in V(G)$ with $\overline{\sigma}(v)=\rho(G)$, $d = {\rm ecc}_G(v)$, $ N_{i}=\{z \in V(G)|d_{G}(v,z)=i \} $ and $ |N_{i}|=n_i $ for $i \in \{0,1,\ldots,d\}$. Clearly $ n_0=1 $, each $ n_i $ is a positive integer for $ i=1,2,\ldots, d $ and $ \sum_{i=0}^{d} n_i = n $. \\

\noindent {\sc Claim 1:} $G = K_{n_0} + K_{n_1} + \ldots + K_{n_{d}}$. \label{claim1} \\

 \noindent Recall that $G$ has maximum size among all graphs of size at least $m$ for which $\sigma(v,G)$ is maximized. Hence $N_i$ induces a complete subgraph of $G$, and every vertex in $N_{i-1}$ is adjacent to every vertex in $N_i$ for all $i\in \{1,2,\ldots, d\}$, otherwise we could add an edge and increase the size of $G$ without changing $\sigma(v,G)$. Hence Claim 1 follows. \\
 
 
\noindent {\sc Claim 2:} For all $ i \in \{1,2, \ldots, d-3\} $, we have $ n_i = \kappa $.\\
 
\noindent Let $ i \in \{1,2, \ldots, d-3\} $. Since removing the $n_j$ vertices of $N_j$ 
from $G$ yields a disconnected graph, and since $G$ is $\kappa$-connected, we have 
$n_i \geq \kappa$.

Suppose to the contrary that there exists $ j \in \{1,\ldots,d-3\} $ such that 
$ n_{j}>\kappa $. Let $j$ be the 
smallest such value. Then $n_{i} = \kappa$ for all $ i \in \{1,\ldots,j-1\} $. Now consider 
the graph 
$G^{*}=K_{n_0} + K_{n_1} + \ldots + K_{n_j-1}+K_{n_{j+1}+1} +\ldots + K_{n_{d-1}} + K_{n_d}$. 
Then $m(G^*) = m(G) + n_{j+2} - n_{j-1} \geq m(G)$ since $n_{j+2} \geq \kappa = n_{j-1}$ 
and $G^{*}$ is a $\kappa$-connected $(n,m)$-graph such that $\sigma(v,G^{*}) > \sigma(v,G)$, 
and thus $\rho(G^{*}) > \rho(G)$. This contradiction to the maximility of $\rho(G)$
proves Claim 2.  \\

\noindent {\sc Claim 3:} $n_{d-2}=\kappa$. \\

\noindent
Suppose to the contrary that $n_{d-2} \neq\kappa$. As in Claim 2 we get $n_{d-2} > \kappa$. 
Consider the graph 
$G^{*}=K_{n_0} + K_{n_1} + \ldots+ K_{n_{d-2}-1} +K_{n_{d-1}+2}+ K_{n_d-1}$. 
It is easy to verify that $m(G^*)= m(G) + n_d -n_{d-3} + n_{d-2} -1$, and so, 
since $n_{d-3} = \kappa$ and $n_{d-2} \geq \kappa+1$, we have 
$m(G^*) \geq m(G) + n_d > m(G)$. Also $\rho(G^*) = \rho(G)$. This contradicts our choice of $G$ 
as having maximum size among graphs of maximum remoteness, and so Claim 3 follows. \\

\noindent It follows from Claims 1 to 3 that $G$ is a $\kappa$-connected path-complete 
graph. Letting $G'=G$ proves \eqref{eq:exists-pc-graph-kappa}. \\[1mm]

By \eqref{eq:exists-pc-graph-kappa}, there exists a $\kappa$-connected path-complete graph
$G'$ of order $n$ and size at least $m$ with $\rho(G) \leq \rho(G')$. By the 
definition of  $PK_{n,m,\kappa}$ we have $m(G') \geq m(PK_{n,m,\kappa})$. 
By Lemma \ref{la:size-vs-remoteness-in-kappa-connected-pc}(a), we thus have
$\rho(G') \leq \rho(PK_{n,m,\kappa})$. Hence
\[ \rho(G) \leq \rho(G') \leq \rho(PK_{n,m,\kappa}), \]
which proves (a). \\

(b) Now assume that equality holds in \eqref{eq:bound-on-rho-for-kappa}, i.e., that
$\rho(G) = \rho(PK_{n,m,\kappa})$, and furthermore that 
$m \equiv \binom{n-1}{2} \pmod{\kappa}$ and
$\frac{1}{2} [n(3\kappa-1) - 2\kappa^2 - \kappa+1 - b(\kappa-b)] 
\leq m \leq \binom{n-1}{2}$, where $b$ is as defined above. 
It follows from Lemma \ref{la:size-vs-remoteness-in-kappa-connected-pc}(c) that the 
graph $PK_{n,m,\kappa}$ has exactly $m$ edges. 

It follows from part (a) that $G$ has maximum remoteness among all $\kappa$-connected 
$(n,m)$-graphs. We claim that $G$ has maximum size among all such graphs 
maximising the remoteness. Suppose not. Then there exists a $\kappa$-connected 
$(n,m+1)$-graph $G''$ with $\rho(G) = \rho(G'')$. Applying \eqref{eq:bound-on-rho-for-kappa}
to $G''$ we get that
\[ \rho(G) = \rho(G'') \leq \rho(PK_{n,m+1,\kappa}) < \rho(PK_{n,m,\kappa}), \]
where the last inequality follows from 
Lemma \ref{la:size-vs-remoteness-in-kappa-connected-pc} (b) and the fact that 
$m(PK_{n,m,\kappa})= m < m(PK_{n,m+1,\kappa})$. Hence
$G$ has maximum size among all $\kappa$-connected graphs of order $n$ 
maximising the remoteness. 

The proof of (a) shows that, if $G$ has maximum size among all $\kappa$-connected
path-complete $(n,m)$-graphs, then $G$ is a path-complete graph. Hence $G=PK_{n,m',\kappa}$
for some $m'$ with $m' \geq m$. Since by 
Lemma \ref{la:size-vs-remoteness-in-kappa-connected-pc} we have 
$\rho(PK_{n,m',\kappa}) < \rho(PK_{n,m,\kappa})$ if $m'>m$, we have $m'=m$, 
and thus $G=PK_{n,m,\kappa}$, as desired. 
\end{proof}

Evaluating the remoteness of $PK_{n,m,\kappa}$ yields the following corollary.
	
\begin{corollary}   \label{coro:evaluation-of-kappa-connected-path-complete}
Let $G$ be a $\kappa$-connected graph of order $n$ and size $m$, with
$\frac{1}{2} [n(3\kappa-1) - 2\kappa^2 - \kappa+1 - b(\kappa-b)] 
\leq m \leq \binom{n-1}{2}$, where $b$ is the integer in $\{1,2,\ldots, \kappa\}$ with
$b \equiv n-1 \pmod{\kappa}$. Let $m^*$ be the smallest integer with $m^*\geq m$ and 
$m^* \equiv \binom{n-1}{2} \pmod{\kappa}$.
Then 
\[ \rho(G) \leq  \frac{n}{2\kappa} +2 - \frac{1}{\kappa} - \frac{\kappa-1}{n-1} - \frac{m^*}{\kappa(n-1)}, \]
and this bound is sharp.  
\end{corollary}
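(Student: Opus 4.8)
The plan is to reduce the whole statement to an exact evaluation of $\rho(PK_{n,m,\kappa})$ and then to compute that quantity in closed form. First I would check that $PK_{n,m,\kappa}$ is well-defined under the hypotheses: since $m \le \binom{n-1}{2}$ and the achievable sizes of $\kappa$-connected path-complete graphs of order $n$ are exactly the integers in the stated range that are congruent to $\binom{n-1}{2}$ modulo $\kappa$ (Lemma \ref{la:size-vs-remoteness-in-kappa-connected-pc}(c)), the minimum achievable size that is $\ge m$ is precisely $m^*$; thus $PK_{n,m,\kappa}$ exists and $m(PK_{n,m,\kappa}) = m^*$. Because $m \le \binom{n-1}{2}$, Theorem \ref{th:k-rho}(a) gives $\rho(G) \le \rho(PK_{n,m,\kappa})$, so it suffices to show that $\rho(PK_{n,m,\kappa})$ equals the right-hand side of the claimed bound.

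The heart of the argument is the identity
\[ \kappa\,\sigma(v,H) + m(H) = \binom{n}{2} + (2\kappa-1)(n-1) - \kappa(\kappa-1), \]
valid for \emph{every} $\kappa$-connected path-complete graph $H$ of order $n$, where $v$ is the apex vertex of $K_1$; here $\sigma(v,H)$ is the total distance of $v$, and $\rho(H) = \sigma(v,H)/(n-1)$ since $v$ attains the remoteness (as seen in the proof of Lemma \ref{la:size-vs-remoteness-in-kappa-connected-pc}(a)). I would prove this identity by induction along the chain of path-complete graphs of order $n$ supplied by Lemma \ref{la:size-vs-remoteness-in-kappa-connected-pc}(b),(c). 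For the base case take the unique maximum-size graph $K_1 + K_{\kappa} + K_{n-\kappa-2} + K_1$: its distance levels have sizes $1,\kappa,n-\kappa-2,1$, so $\sigma(v,H) = 2n-\kappa-1$, while $m(H) = \binom{n-1}{2}$, and a direct substitution confirms the identity. For the inductive step I use that passing from a path-complete graph to the next one in the chain (with $\kappa$ more edges) amounts to moving a single vertex from the deepest distance level up by exactly one level; this adds exactly $\kappa$ edges (the new edges to the $\kappa$ vertices one level above, no edges lost) and decreases $\sigma(v,H)$ by exactly $1$, so the quantity $\kappa\,\sigma(v,H) + m(H)$ is invariant along the chain and therefore equals its base-case value everywhere.

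Solving the identity for $\sigma(v,H)$ with $m(H) = m^*$ and dividing by $n-1$ then yields, after a routine rearrangement,
\[ \rho(PK_{n,m,\kappa}) = \frac{n}{2\kappa} + 2 - \frac{1}{\kappa} - \frac{\kappa-1}{n-1} - \frac{m^*}{\kappa(n-1)}, \]
which is exactly the asserted bound; combined with $\rho(G)\le\rho(PK_{n,m,\kappa})$ this proves the inequality. For sharpness, observe that when $m \equiv \binom{n-1}{2}\pmod{\kappa}$ we have $m^* = m$, so $PK_{n,m,\kappa}$ is itself a $\kappa$-connected $(n,m)$-graph realising equality. (As an alternative to the induction one could instead write $H = K_1 + [K_{\kappa}]^{\ell} + K_a + K_b$, compute $\sigma(v,H) = \tfrac{1}{2}\kappa\ell(\ell+1) + (\ell+1)a + (\ell+2)b$ and the size directly, and eliminate $\ell,a,b$ using $n-1 = \ell\kappa + a + b$; the identity drops out, but the bookkeeping is heavier.)

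The main obstacle I anticipate is establishing the invariance step cleanly: I must verify carefully that the vertex-move between consecutive path-complete graphs always changes the size by exactly $\kappa$ and the transmission of $v$ by exactly $1$, including the boundary case $b=1$ treated in Lemma \ref{la:size-vs-remoteness-in-kappa-connected-pc}(c), where the diameter drops and the move is realised as the operation $K_1 + [K_{\kappa}]^{\ell} + K_a + K_1 \mapsto K_1 + [K_{\kappa}]^{\ell-1} + K_{\kappa} + K_{a+1}$. Once this edge-counting is pinned down, the remainder is elementary algebra.
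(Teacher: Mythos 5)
Your proposal is correct, and the decisive step is carried out by a genuinely different method than the paper's. The paper makes the same reduction you do (Theorem \ref{th:k-rho}(a) together with Lemma \ref{la:size-vs-remoteness-in-kappa-connected-pc}(c) gives $m(PK_{n,m,\kappa})=m^*$ and $\rho(G)\le\rho(PK_{n,m,\kappa})$), but then it evaluates $\rho(PK_{n,m,\kappa})$ by direct computation: it writes $PK_{n,m,\kappa}=K_1+[K_{\kappa}]^{\ell}+K_a+K_b$, computes $\sigma(v,\cdot)$ and $m^*$ explicitly in terms of $\ell,a,b,\kappa,n$, and verifies by substitution that the difference $\epsilon$ between $\rho(PK_{n,m,\kappa})$ and the claimed expression vanishes --- i.e., exactly the ``heavier bookkeeping'' route you relegate to your parenthetical remark. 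Your route instead establishes the invariant $\kappa\,\sigma(v,H)+m(H)=\binom{n}{2}+(2\kappa-1)(n-1)-\kappa(\kappa-1)$ across the whole family by induction along the size-ordered chain, and I checked that its ingredients are sound: the maximum-size graph $K_1+K_{\kappa}+K_{n-\kappa-2}+K_1$ has $\sigma(v)=2n-\kappa-1$ and $m=\binom{n-1}{2}$, which satisfies the identity; both $\kappa$-edge steps from the proof of Lemma \ref{la:size-vs-remoteness-in-kappa-connected-pc}(c) (the generic move from $K_b$ to $K_{b-1}$ and the boundary move at $b=1$) shift one deepest-level vertex up exactly one level, gaining exactly the $\kappa$ edges to the adjacent $K_{\kappa}$ and losing none, so $\sigma(v)$ drops by exactly $1$ and the invariant persists; since path-complete graphs of order $n$ have pairwise distinct sizes, every such graph lies on this chain; and solving the identity for $\rho=\sigma(v)/(n-1)$ (legitimate because the apex attains the remoteness, as noted in the proof of Lemma \ref{la:size-vs-remoteness-in-kappa-connected-pc}(a)) yields precisely the stated bound. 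The trade-off: the paper's calculation is self-contained but leaves the decisive algebra as an unilluminating ``straightforward to verify'' step, whereas your invariance argument concentrates all arithmetic in one small base case plus a local edge count, and it explains structurally why the transmission is an affine function of the size over this family; your sharpness observation ($PK_{n,m,\kappa}$ itself, with $m^*=m$ in the congruent case) agrees with the paper's.
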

 
\begin{proof}
Let $m^*$ be as defined above. It follows from Lemma \ref{la:size-vs-remoteness-in-kappa-connected-pc} that
the graph $PK_{n,m,\kappa}$ has size $m^*$. 
Recall that $PK_{n,m, \kappa}=K_{1}+[K_{\kappa}]^{\ell}+K_{a}+K_{b}$ for some $\kappa,\ell,a,b \in \mathbb{N}$.  Then $a+b=n-\ell\kappa-1$.
Let $v$ be the vertex in $K_1$, so $v$ realizes the remoteness. 
Let $H:= PK_{n,m, \kappa}-V(K_a \cup K_b)$. 
Straightforward calculations show that $n(H)=\ell \kappa +1$, 
$\sigma(v,H)=\frac{\kappa}{2}(\ell(\ell+1))$ and 
$m(H)=\ell {\kappa \choose 2}+(\ell-1)\kappa^2+\kappa$. Hence
\[\sigma(v,PK_{n,m, \kappa})=\sigma(v,H)+(\ell+1)(a+b)+b=\frac{\kappa}{2}(\ell(\ell+1))+(\ell+1)(a+b)+b,\]
and, since $m(PK_{n,m,\kappa})=m^*$,  
\begin{align*}
m^* =& \; m(H)+{a+b+\kappa \choose 2}-b\kappa - {\kappa\choose 2} \\
=& \; \ell {\kappa \choose 2}+(\ell-1)\kappa^2+\kappa+{a+b+\kappa \choose 2}-b\kappa - {\kappa\choose 2}+x\\
=& \; \dfrac{\kappa(\ell-1)(3\kappa-1)}{2}+\kappa+{n-(\ell-1)\kappa+1 \choose 2}-b\kappa.
\end{align*}
Define $\epsilon=\rho(PK_{n,m, \kappa})-\left(\dfrac{n}{2\kappa} + 2 -\dfrac{1}{\kappa}-\dfrac{\kappa-1}{n-1}-\dfrac{
m^*}{\kappa(n-1)}\right)$. 
Substituting  the above terms for $\rho(PK_{n,m,\kappa})$ and $m^*$, it is straightforward 
to verify that $\epsilon=0$. 
This proves that 
$\rho(PK_{n,m, \kappa}) 
     = \left(\dfrac{n}{2\kappa} + 2 
     -\dfrac{1}{\kappa}-\dfrac{\kappa-1}{n-1}-\dfrac{ m^*}{\kappa(n-1)}\right)$,
and the corollary follows.
\end{proof}

For $\kappa=1$, Corollary \ref{coro:evaluation-of-kappa-connected-path-complete} yields
Theorem \ref{EntJacSny1976} and, in addition, characterises the extremal graphs
as follows. Note that for $\kappa=1$ we have $m^*=m$.

\begin{corollary} \label{coro:extremal-graphs-for-Entringer-Jackson}
Let $G$ be a connected graph of order $n$ and size $m$, where $n-1 \leq m \leq \binom{n-1}{2}$. Then 
\[ \rho(G) \leq  \frac{n+2}{2} - \frac{m}{n-1}, \]
with equality if and only if $G=PK_{n,m,1}$. 
\end{corollary}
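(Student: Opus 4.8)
The plan is to derive Corollary \ref{coro:extremal-graphs-for-Entringer-Jackson} as the special case $\kappa=1$ of Corollary \ref{coro:evaluation-of-kappa-connected-path-complete}, which is the natural way to obtain both the bound and the extremal characterisation simultaneously. First I would observe that when $\kappa=1$ the lower bound on $m$ appearing in the hypothesis of Corollary \ref{coro:evaluation-of-kappa-connected-path-complete} simplifies: substituting $\kappa=1$ gives $b=1$ (since $b\in\{1,\ldots,\kappa\}$ forces $b=1$), and the expression $\frac{1}{2}[n(3\kappa-1)-2\kappa^2-\kappa+1-b(\kappa-b)]$ collapses to $n-1$. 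Thus the hypothesis $n-1\le m\le\binom{n-1}{2}$ in the present corollary matches exactly the admissible range, and every connected $(n,m)$-graph in this range is covered.

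Next I would handle the congruence and the quantity $m^*$. Since $\kappa=1$, the condition $m^*\equiv\binom{n-1}{2}\pmod{1}$ is vacuous, so the smallest integer $m^*\ge m$ satisfying it is simply $m^*=m$; this is precisely the remark the authors flag before the statement. Substituting $\kappa=1$ and $m^*=m$ into the bound of Corollary \ref{coro:evaluation-of-kappa-connected-path-complete} gives
\[
\rho(G)\le\frac{n}{2}+2-1-0-\frac{m}{n-1}=\frac{n+2}{2}-\frac{m}{n-1},
\]
where the term $\frac{\kappa-1}{n-1}$ vanishes. This recovers the claimed inequality and, incidentally, reproves Theorem \ref{EntJacSny1976}.

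For the equality characterisation I would invoke Theorem \ref{th:k-rho}(b) with $\kappa=1$. Since the range of $m$ in the hypothesis is exactly the admissible range from Lemma \ref{la:size-vs-remoteness-in-kappa-connected-pc}(c) (with the trivial congruence), part (b) applies and tells us that equality $\rho(G)=\rho(PK_{n,m,1})$ forces $G=PK_{n,m,1}$. Conversely, by the definition of $PK_{n,m,1}$ and Corollary \ref{coro:evaluation-of-kappa-connected-path-complete}, the graph $PK_{n,m,1}$ attains the bound with equality, so the "if and only if" is complete. The main thing to verify carefully is the arithmetic that $b=1$ and the lower-bound expression reduce correctly at $\kappa=1$, and that the term $\frac{\kappa-1}{n-1}$ and the congruence genuinely become inert; none of these is a substantial obstacle, so the corollary follows essentially by specialisation.
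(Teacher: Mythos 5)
Your proposal is correct and matches the paper's own (implicit) derivation: the paper obtains this corollary precisely by specialising Corollary \ref{coro:evaluation-of-kappa-connected-path-complete} and Theorem \ref{th:k-rho}(b) to $\kappa=1$, noting as you do that $b=1$, that the lower bound on $m$ collapses to $n-1$, that the congruence modulo $1$ is vacuous so $m^*=m$, and that the term $\frac{\kappa-1}{n-1}$ vanishes. Your arithmetic checks are exactly the verifications needed, so nothing is missing.
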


\section{Maximum remoteness of a $\lambda$-edge-connected graph of given order and size}\label{edge-connectivity}

In this section we determine the maximum remoteness of a $\lambda$-edge connected graph of given order $n$ and size at least $m$. 
For $\lambda=1$, the maximum remoteness was determined in Theorem \ref{EntJacSny1976} and 
Corollary \ref{coro:extremal-graphs-for-Entringer-Jackson}. Our focus here is on graphs with $\lambda=2,3$. 
The proof strategy we employ is similar to that in the previous section. We first define
$\lambda$-edge-connected path-complete graphs, which play a similar role to the
$\kappa$-connected path-complete graphs in the previous section. Since the structure
of $\lambda$-edge-connected path-complete graphs is more varied than that of 
$\kappa$-connected path-complete graphs, the proofs became a little more elaborate.

\begin{definition}\label{deflambda}
Let $\lambda\in\{2,3\}$. A graph $G$ is said to be a $\lambda$-edge-connected path-complete graph if there exist
$k \in \mathbb{N}\cup\{0\}$ and  $a, b \in \mathbb{N}$ with
\[ G = \left\{ \begin{array}{cc}        
  \left[K_{1}+K_{\lambda}\right]^{k}+K_{a}+K_{b} 
  & \textrm{if $k \geq 1$ and $ab \geq \lambda$, or}\\
  \left[K_{1}+K_{\lambda}\right]^{k}+K_{1}+K_{a}+K_{b} 
& \textrm{if $a \geq \lambda$, or}\\ 
\left[K_1 + K_3\right]^k + K_2 + K_a + K_1 
& \textrm{if $\lambda=3$, $k \geq 1$ and $a \geq 3$.}          
 \end{array} \right. \]
 Denote the graphs defined above by
$G_1^{\lambda}(n,k,b)$, $G_2^{\lambda}(n,k,b)$ and $G_3^{\lambda}(n,k)$, respectively, where in each case $n$ is the order of the graph.
\end{definition}


\begin{lemma}  \label{la:3pc-size-vs-remoteness}
Let $\lambda \in \{2,3\}$ and let $H,H'$ be $\lambda$-edge-connected
path-complete graph of order $n$. \\
(a)  $\rho(H+e) < \rho(H)$ for all $e \in E(\overline{H})$. \\[1mm] 
(b) If $H \neq H'$, then either $m(H) < m(H')$ and $\rho(H) \geq \rho(H')$, 
or $m(H) > m(H')$ and $\rho(H) \leq \rho(H')$. \\[1mm]
(c) We have $\frac{5}{3}n-\varepsilon_2(n) \leq m(H) \leq \binom{n}{2}-1$ if $\lambda =2$, 
 where
$\varepsilon_2(n)$ is defined as 
$2$ if $n\equiv 0 \pmod{3}$, as
$\frac{5}{3}$ if $n\equiv 1 \pmod{3}$, and as
$\frac{1}{3}$ if $n\equiv 2 \pmod{3}$.
We have 
and $\frac{9}{4}n-\varepsilon_3(n) \leq m(H) \leq \binom{n}{2}-1$ if $\lambda =3$,
where $\varepsilon_3(n)$ is defined as 
$3$ if $n\equiv 0 \pmod{4}$, as
$\frac{9}{4}$ if $n\equiv 1 \pmod{4}$, as
$\frac{1}{2}$ if $n\equiv 2 \pmod{4}$, and as
$-\frac{9}{4}$ if $n\equiv 3 \pmod{4}$. \\
For every integer $m$ between the smallest and largest size of a $\lambda$-edge-connected
path-complete graph of order $n$ there exists a $\lambda$-edge-connected
path-complete graph of order $n$ whose size is at least $m$ and at most $m+\lambda-1$. 
\end{lemma}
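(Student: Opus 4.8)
The plan is to prove Lemma~\ref{la:3pc-size-vs-remoteness} part by part, since parts (a) and (b) are structural while part (c) is essentially a bookkeeping exercise over the three families in Definition~\ref{deflambda}. For part (a), I would argue exactly as in Lemma~\ref{la:size-vs-remoteness-in-kappa-connected-pc}(a): identify the vertex (or vertices) realising the remoteness in each of $G_1^\lambda, G_2^\lambda, G_3^\lambda$. In each family the ``leftmost'' vertex of degree-deficient type sits at one end of a path-like structure and attains the maximum average distance; adding any missing edge $e\in E(\overline H)$ can only shorten some $v$-to-$w$ distance and never lengthen one, so $\sigma(v,H+e)\le\sigma(v,H)$ with strict inequality because at least one distance drops. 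Hence $\rho(H+e)<\rho(H)$. The only subtlety is handling the boundary cases where two vertices tie for the remoteness, as in the $\kappa$-connected proof; I would enumerate these small cases explicitly.

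For part (b), the natural strategy mirrors \eqref{eq:either-H<H' or H'<H}: show that of any two distinct $\lambda$-edge-connected path-complete graphs of the same order, one is obtained from the other by adding edges, and then invoke part (a). The obstacle here is that Definition~\ref{deflambda} has three families rather than one uniform pattern, so the subgraph relation $\lneqq$ need not always hold between two arbitrary members. This is precisely why part (b) is stated with non-strict inequalities $\rho(H)\ge\rho(H')$ rather than strict ones, and it is the main difficulty of the lemma. I would set up a total preorder on the graphs by their defining parameters, and show that consecutive graphs in this ordering differ by an edge-addition operation analogous to the ones in the $\kappa$-connected case (merging the rightmost block into its neighbour, or shifting a vertex from $K_b$ into the adjacent block). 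Monotonicity of remoteness then follows by composing part (a) across the chain; ties in $\rho$ correspond to edge additions that do not change the transmission of the remoteness-realising vertex.

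For part (c), I would first pin down the extremal graphs. The maximum size $\binom{n}{2}-1$ is attained by the densest member of the family (the one whose complement is a single edge), and a direct count confirms this in each of $G_1^\lambda, G_2^\lambda, G_3^\lambda$. For the minimum size I would identify the sparsest admissible graph, which is the longest ``path of blocks'' $[K_1+K_\lambda]^k+\cdots$; computing its size as a function of $n$ and reducing $n$ modulo $\lambda+1$ yields the stated piecewise formulas with the correction terms $\varepsilon_2(n)$ and $\varepsilon_3(n)$. These are routine but tedious arithmetic calculations over residue classes, so I would simply state the block decomposition and the resulting count rather than grind through every case.

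Finally, for the density (``gap'') claim in part~(c), the plan is to exhibit, for each $\lambda$-edge-connected path-complete graph of order $n$ other than the densest one, an explicit operation producing another such graph whose size exceeds the original by at most $\lambda$ (and at least $1$). As in Lemma~\ref{la:size-vs-remoteness-in-kappa-connected-pc}(c), this is done by moving a single vertex from a right-hand block into the block immediately to its left, or by collapsing the rightmost $K_\lambda$ into the adjacent complete graph; each such move stays within one of the three families and increments the size by a bounded amount. Iterating these moves sweeps the entire size range, so every integer $m$ in the admissible interval is within $\lambda-1$ of the size of some member of the family. The main care needed is checking that these moves keep the graph $\lambda$-edge-connected and within Definition~\ref{deflambda}, especially when transitioning between the three families; I expect this transition bookkeeping, rather than any single calculation, to be the real work.
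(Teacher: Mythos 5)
Your plans for parts (a) and (c) coincide with the paper's proof: (a) is carried over from Lemma~\ref{la:size-vs-remoteness-in-kappa-connected-pc}, and (c) is done by identifying the sparsest and densest members in each residue class and checking that moving one vertex between adjacent blocks changes the size by at most $\lambda$. The genuine gap is in part (b), for $\lambda=3$. You propose to totally order all members by their parameters, show that consecutive members differ by edge additions, and deduce monotonicity of $\rho$ by composing part (a), with ``ties in $\rho$ corresponding to edge additions that do not change the transmission of the remoteness-realising vertex.'' That last clause is inconsistent with part (a): part (a) asserts that adding \emph{any} edge to a $\lambda$-edge-connected path-complete graph \emph{strictly} decreases $\rho$, so a tie in $\rho$ can never occur between two path-complete graphs related by edge additions. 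Yet ties do occur, and this is exactly where the third family intervenes: one computes (as the paper does) that
\[
m\bigl(G_1^3(n,k,n-4k-1)\bigr) \;<\; m\bigl(G_3^3(n,k)\bigr) \;<\; m\bigl(G_1^3(n,k,n-4k-2)\bigr),
\qquad
\rho\bigl(G_1^3(n,k,n-4k-1)\bigr) \;=\; \rho\bigl(G_3^3(n,k)\bigr).
\]
Since the remoteness values are equal, $G_3^3(n,k)$ cannot be obtained from $G_1^3(n,k,n-4k-1)$ by adding an edge (part (a) would force strict inequality), so the subgraph chain you rely on provably does not exist once the $G_3^3$ graphs enter. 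Your approach is sound for $\lambda=2$, where all members lie in one comparable family, but it breaks at precisely the point that makes the lemma nontrivial and forces the non-strict inequalities in its statement.

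The paper's resolution is structurally different at this point: it restricts the chain/comparability argument to the subfamily $A_n^\lambda$ of graphs of types $G_1^\lambda$ and $G_2^\lambda$ (ordered lexicographically by $(k,i,b)$), which settles $\lambda=2$ completely and all pairs inside $A_n^3$; it then treats each $G_3^3(n,k)$ by direct computation, sandwiching its size strictly between those of $G_1^3(n,k,n-4k-1)$ and $G_1^3(n,k,n-4k-2)$, verifying that no graph of $A_n^3$ has size in that gap, and checking $\rho(G_1^3(n,k,n-4k-1)) = \rho(G_3^3(n,k)) > \rho(G_1^3(n,k,n-4k-2))$ explicitly; the weak inequalities of (b) then follow case by case. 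To repair your argument you must replace the ``tie $=$ neutral edge addition'' step with this kind of direct size-and-remoteness comparison for the incomparable pairs involving $G_3^3$.
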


\begin{proof}
Fix $n$ and $\lambda \in \{2,3\}$. \\
(a) The proof is almost identical to Lemma \ref{la:size-vs-remoteness-in-kappa-connected-pc} and thus omitted. \\[1mm]
(b) Denote by $A_n^{\lambda}$ the set of all $\lambda$-edge-connected path-complete graphs of 
order $n$ that are of the form $G_1^{\lambda}(n,k,b)$  or $G_2^{\lambda}(n,k,b)$. 
By considering the lexicographic ordering of the triples $(k,i,b)$ arising from the graphs $G_i(n,k,b)$ for $i\in \{1,2\}$, we show with
arguments similar to those in the proof of 
Lemma \ref{la:size-vs-remoteness-in-kappa-connected-pc} that 
\begin{equation}
\textrm{If $H,H' \in A_n^{\lambda}$ with $H \neq H'$, then either 
$H \lneqq H'$ or $H' \lneqq H$.} 
\end{equation}
By (a) we conclude that we have either $m(H)<m(H')$ and $\rho(H) > \rho(H')$, or 
$m(H')<m(H)$ and $\rho(H') > \rho(H)$, and so (b) holds if $H, H' \in A_n^{\lambda}$. Since
$A^2_n$ contains all $2$-edge-connected path-complete graphs of order $n$, the
lemma holds for $\lambda=2$.  

To complete the proof of the lemma, it remains to show the statement if $\lambda=3$ 
and at least one of the graphs $H, H'$ is not in $A_n^3$. 
First assume that both, $H$ and $H'$ are not in $A_n^3$. Then $H=G_3^3(n,k)$ and 
$H'=G_3^3(n,k')$ 
for some $k,k' \in \mathbb{N}$. Since $H \neq H'$ we have $k\neq k'$. If $k<k'$, then arguments
similar to those in the proof of Lemma \ref{la:size-vs-remoteness-in-kappa-connected-pc}
show that $H' \lneqq H$ and so $m(H') < m(H)$, a contradiction. 
If $k>k'$, then arguments
similar to those in the proof of Lemma \ref{la:size-vs-remoteness-in-kappa-connected-pc}
show that $H \lneqq H'$, which implies that $\rho(H) > \rho(H')$, and the
lemma holds.

We now assume that exactly one of $H$ or $H'$ is in $A_n^3$, so either $H=G_3^3(n,k)$ or 
$H'=G_3^3(n,k)$ for some $k$. 
We compare $G_3^3(n,k)$ with the $3$-edge-connected path-complete graphs $G_1^3(n,k,n-4k-1)$ and $G_1^3(n,k,n-4k-2)$. Simple calculations show that 
$m(G_1^3(n,k,n-4k-1)) < m(G_3^3(n,k)) <  m(G_1^3(n,k,n-4k-2))$ and that $A_n^3$ contains no
graphs whose size is between $m(G_1^3(n,k,n-4k-1))$ and  $m(G_1^3(n,k,n-4k-2))$. 
It is easy to verify that
$\rho(G_1^3(n,k,n-4k-1)) = \rho(G_3^3(n,k)) > \rho(G_1^3(n,k,n-4k-2))$.

If now $H=G_3^3(n,k)$, then it follows by $m(H) \leq m(H')$ and $H' \in A_n^3$ that 
$m(G_1^3(n,k,n-4k-2)) \leq m(H')$ and $\rho(H) > \rho(G_1^3(n,k,n-4k-2)) \geq \rho(H')$.  
If $H'=G_3^3(n,k)$, then it follows by $m(H) \leq m(H')$ and $H \in A_n^3$ that 
$m(H) \leq m(G_1(n,k,n-4k-1))$ and $\rho(H) \geq \rho(G_1(n,k,n-4k-1)) \geq \rho(H')$.
Part (b) now follows. \\[1mm]
(c) To determine the largest size of a $\lambda$-edge-connected path-complete graph  note 
that the graph $G_2^{\lambda}(n,0,1)$ has size $\binom{n}{2}-1$, 
and the complete graph is not a $\lambda$-edge-connected path-complete graph. 

It is easy to verify that the smallest size among $2$-edge-connected path-complete 
graphs of order $n$ is attained 
by $G_1^2(n,\frac{n-3}{3},2)$ if $n\equiv 0 \pmod{3}$, 
by $G^2_2(n,\frac{n-4}{3},1)$ if $n\equiv 1 \pmod{3}$, and
by $G^2_2(n,\frac{n-5}{3},2)$ if $n\equiv 2 \pmod{3}$. Their sizes are
$\frac{5}{3}n-2$, $\frac{5}{3}n- \frac{5}{3}$ and $\frac{5}{3}n-\frac{1}{3}$ if
$n$ is congruent to $0$, $1$ or $2$, respectively, modulo $3$.

It is easy to verify that the smallest size among $3$-edge-connected path-complete 
graphs of order $n$ is attained 
by $G_1^3(n,\frac{n-4}{4},3)$ if $n\equiv 0 \pmod{4}$, 
by $G_1^3(n,\frac{n-5}{4},1)$ if $n\equiv 1 \pmod{4}$, 
by $G^2_3(n,\frac{n-6}{4},2)$ if $n\equiv 2 \pmod{4}$, and
by $G^2_3(n,\frac{n-7}{4},3)$ if $n\equiv 3 \pmod{4}$. Their sizes are
$\frac{9}{4}n-3$, $\frac{9}{4}n- \frac{9}{4}$, $\frac{9}{4}n- \frac{1}{2}$ 
and $\frac{9}{4}n+\frac{9}{4}$ if
$n$ is congruent to $0$, $1$, $2$, or $3$, respectively, modulo $4$.

Also, for $i\in \{1,2\}$ the difference between the sizes of $G_i^{\lambda} (n,k,b)$ and 
$G^{\lambda}_i(n,k,b-1)$ is at most $\lambda$ if $b>1$, and the difference between
the sizes of $G_2^{\lambda}(n,k,1)$ and $G_1^{\lambda}(n,k, n-k(\lambda+1)-1)$ is $1$.
Hence, the set of sizes of $\lambda$-edge-connected path-complete graphs of order $n$ 
contains no gaps of length $\lambda$ or more. Part (c) follows. 
\end{proof}

The proof of Lemma \ref{la:3pc-size-vs-remoteness} shows that for $\lambda \in \{2,3\}$ 
no two $\lambda$-edge-connected path-complete graphs of order $n$ have the same size. 
Hence, for any $n,m$ for which there exists a non-complete $\lambda$-edge-connected
graph of order $n$ and size $m$, there exists a unique $\lambda$-edge-connected path-complete
graph of order $n$ and size at least $m$ that has minimum size among all such graphs. 
We denote this graph by $PK_{n,m}^{\lambda}$.

\begin{theorem}  \label{theo:3-edge-conn}
Let $\lambda \in \{2,3\}$. 
Let $G$ be a $\lambda$-edge-connected graph of order $n$ and size at least $m$ that is not complete. 
Then 
\[ \rho(G) \leq \rho(PK^{\lambda}_{n,m}). \]
\end{theorem}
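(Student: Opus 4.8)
The plan is to mirror the structure of the proof of Theorem \ref{th:k-rho}(a), since Theorem \ref{theo:3-edge-conn} is the edge-connectivity analogue of the vertex-connectivity result already proved. The overall strategy has two phases: first, show that the maximum remoteness over all $\lambda$-edge-connected $(n,m)$-graphs is attained by a $\lambda$-edge-connected path-complete graph; second, use Lemma \ref{la:3pc-size-vs-remoteness} to pass from that path-complete graph to $PK^{\lambda}_{n,m}$.

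For the first phase, I would take $G$ to be a $\lambda$-edge-connected $(n,m)$-graph of maximum remoteness, and among those, of maximum size. Fix a vertex $v$ with $\overline{\sigma}(v) = \rho(G)$, set $d = {\rm ecc}_G(v)$, and let $N_i$, $n_i$ be the distance layers as in the earlier proof. The first claim is again that $G = K_{n_0} + K_{n_1} + \cdots + K_{n_d}$: since $G$ has maximum size subject to maximising $\sigma(v,G)$, each layer must be complete and consecutive layers fully joined, else an edge could be added without changing distances from $v$. The crucial difference from the vertex-connectivity case is the lower bound on the layer sizes. Here $\lambda$-edge-connectivity does \emph{not} force $n_i \geq \lambda$ for every internal layer; instead it forces the number of edges between consecutive layers, namely $n_{i-1} n_i$, to be at least $\lambda$. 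So the analogue of Claim 2 must establish that the sequence $(n_i)$ is forced into the specific patterns appearing in Definition \ref{deflambda}: essentially blocks of the form $(1,\lambda)$ repeated, possibly with a size-$1$ or size-$2$ layer inserted near the far end. I expect this to be the main obstacle, because the edge-cut condition $n_{i-1}n_i \geq \lambda$ is satisfied by several small configurations (for $\lambda=3$ one may have $n_i=1, n_{i+1}=3$ or $n_i=3,n_{i+1}=1$ or consecutive layers of size $2$ and $2$, etc.), which is precisely why Definition \ref{deflambda} has three cases and why the authors warn that ``the proofs became a little more elaborate.''

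The way I would handle this obstacle is by a shifting/exchange argument analogous to Claims 2 and 3 of Theorem \ref{th:k-rho}: suppose some internal layer is ``too large'' or the pattern deviates from the admissible forms, then construct a competitor graph $G^*$ by moving one vertex from a later layer to an earlier one (or redistributing a vertex between adjacent layers). One checks that $G^*$ remains $\lambda$-edge-connected, has order $n$ and size at least $m$, and satisfies $\sigma(v,G^*) > \sigma(v,G)$ or else $\sigma(v,G^*)=\sigma(v,G)$ with $m(G^*)>m(G)$; either outcome contradicts the maximality of $(\rho(G), m(G))$ in the lexicographic sense. Carrying this out case by case on the residues governing whether the extremal sequence ends in a $K_1$, a $K_2+K_1$, or a clean $[K_1+K_\lambda]$-block is where the bookkeeping lives; the three cases of Definition \ref{deflambda} should be exactly the stable configurations left after all exchanges are exhausted.

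Once the first phase yields a $\lambda$-edge-connected path-complete graph $G'$ with $\rho(G) \leq \rho(G')$, the second phase is short and identical in spirit to the end of Theorem \ref{th:k-rho}(a). By definition of $PK^{\lambda}_{n,m}$ as the minimum-size $\lambda$-edge-connected path-complete graph of order $n$ and size at least $m$, we have $m(G') \geq m(PK^{\lambda}_{n,m})$, and then Lemma \ref{la:3pc-size-vs-remoteness}(b) gives $\rho(G') \leq \rho(PK^{\lambda}_{n,m})$, whence
\[ \rho(G) \leq \rho(G') \leq \rho(PK^{\lambda}_{n,m}), \]
which is the claimed bound. The only subtlety relative to the vertex case is that Lemma \ref{la:3pc-size-vs-remoteness}(b) is stated with non-strict inequalities $\rho(H)\geq\rho(H')$ (because for $\lambda=3$ the graphs $G_3^3(n,k)$ and $G_1^3(n,k,n-4k-1)$ can tie in remoteness), but a non-strict bound is all that is needed here, so no extra work is required.
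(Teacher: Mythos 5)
Your overall plan coincides with the paper's: take a $\lambda$-edge-connected $(n,m)$-graph maximising remoteness and, subject to that, size; show it is a sequential sum of cliques along the distance layers from a vertex realising the remoteness; force the layer sequence into the patterns of Definition~\ref{deflambda} by vertex-shifting exchanges; then finish via the definition of $PK^{\lambda}_{n,m}$ and Lemma~\ref{la:3pc-size-vs-remoteness}(b). Your second phase is correct as written, including the observation that the non-strict inequality in Lemma~\ref{la:3pc-size-vs-remoteness}(b) suffices.

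The gap is in the mechanism you propose for the exchange phase. You assert that any deviation from the admissible patterns admits an exchange $G^*$ with either $\sigma(v,G^*)>\sigma(v,G)$, or $\sigma(v,G^*)=\sigma(v,G)$ and $m(G^*)>m(G)$, so that lexicographic maximality of $(\rho,m)$ yields a contradiction. For $\lambda=3$ this dichotomy fails: there are deviating layer sequences whose repairing exchanges are neutral in \emph{both} remoteness and size, and $(\rho,m)$-maximality alone does not force path-completeness. Concretely, in the paper's Claim~2, when the layer three steps beyond the initial alternating $(1,\lambda)$-segment has size $2$, the repair (shift one vertex from layer $j+1$ to layer $j+2$, and one vertex from layer $d$ to layer $d-1$) changes $\sigma(v)$ by $+1-1=0$ and changes the size by $n_{d-2}-1$, which is zero when $n_{d-2}=1$; no contradiction with your extremal choice is available. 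The paper's remedy is a \emph{third} lexicographic criterion: among all graphs maximising $(\rho,m)$, it fixes one whose initial $(1,\lambda)$-segment is longest, and the neutral exchange, iterated, strictly lengthens that segment, supplying the missing contradiction. Relatedly, even with this tie-breaker not every case ends in a contradiction: in the endgame (when $n_{d-2}\geq\lambda$) the paper does not contradict anything but performs a neutral exchange that directly \emph{produces} a path-complete graph with the same remoteness and size. This is unavoidable, since the statement ``every $(\rho,m)$-extremal graph is path-complete'' is false; only ``some graph of remoteness at least $\rho(G)$ is path-complete'' can be established. Without the extra tie-breaking criterion, or an equivalent device for converting remoteness- and size-neutral configurations, your case analysis cannot be closed.
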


\begin{proof}
Let $G$ a $\lambda$-edge-connected graph of order $n$ and size at least $m$. 
We first prove that there exists a $\lambda$-edge-connected path-complete $(n,m)$-graph
$G'$ with
\begin{equation} \label{eq:3-edge-conn-theo-1} 
\rho(G) \leq \rho(G'). 
\end{equation}
We may assume that $G$ is a $\lambda$-edge-connected $(n, m)$-graph such that 
$\rho(G)$ is maximised among all $\lambda$-edge-connected $(n, m)$-graphs. 
We may further assume that, among those graphs, $G$ is one of maximum size.
Let $v$ be a vertex of $G$ with $\overline{\sigma}(v) = \rho(G)$, let 
$d = {\rm ecc}(v,G)$. For $i=0,1,\ldots,d$ let 
$N_i$ be the set of vertices at distance $i$ from $v$, and let $n_i=|N_i|$.
As in the proof of Theorem \ref{th:k-rho}, we have
\[ G = K_{n_0} + K_{n_1} + K_{n_2} + \cdots + K_{n_d}. \]
Clearly, $n_0=1$. Let $j \in \{0,1,\ldots,d\}$. 
If $d\leq 2$, then the statement of the lemma clearly holds, so we assume that $d\geq 3$. 
If the values $n_0, n_1,\ldots,n_j$ alternate between $1$ and $\lambda$, i.e., if for 
$i \in \{0,1,2,\ldots, j\}$ we have $n_i=1$ if $i$ is even, and $n_i=\lambda$ if $i$ is odd,
then we say that $n_0, n_1,\ldots,n_j$ is an initial $(1,\lambda)$-segment.  
We may assume that, among those graphs maximising $\rho$ and $m$, 
our graph $G$ has an initial $(1,\lambda)$-segment of maximum length, i.e., the value
of $j$ is maximum. 

If we have $j \geq d-2$, then $G$ is isomorphic to one of the first two graphs in Definition \ref{deflambda}, and the lemma holds in this case. Hence we assume from now on that 
$j \leq d-3$. 
\\[1mm]
{\sc Claim 1:} $j$ is odd, i.e., $n_j=\lambda$. \\
Suppose to the contrary that $j$ is even, i.e., $n_j=1$.  Since $n_{j+1} \neq \lambda$,
and since $n_jn_{j+1} \geq \lambda$ by the $\lambda$-edge-connectivity of $G$, we have that 
$n_{j+1} \geq \lambda +1$. Then the graph 
\[ G'= K_{n_0} + K_{n_1} + \cdots + K_{n_j} + K_{n_{j+1}-1} + K_{n_{j+2}+1} 
               + K_{n_{j+3}} + K_{n_{j+4}} +  \cdots+  K_{n_d} \]
is a $\lambda$-edge-connected $(n,m)$-graph since 
$m(G') = m(G) + n_{j+3}-n_j = m(G) + n_{j+3}-1 \geq m(G)$, and
has greater remoteness than $G$. This contradiction to
the choice of $G$ proves Claim 1. \\[1mm]
{\sc Claim 2:}  $n_{j+3} =1$. \\
By the maximality of $j$ we have $n_{j+1} \geq 2$. 
We first show that $n_{j+3} \leq \lambda -1$. Suppose not. Then $n_{j+3} \geq \lambda$, 
and the graph 
\[ G'= K_{n_0} + K_{n_1} + \cdots + K_{n_j} + K_{n_{j+1}-1} + K_{n_{j+2}+1} 
               + K_{n_{j+3}} + K_{n_{j+4}} +  \cdots + K_{n_d} \] 
is a $\lambda$-edge-connected $(n,m)$-graph since $m(G') = m(G) +n_{j+3}-n_j \geq m(G)$,
and has greater remoteness than $G$. This contradiction to 
the choice of $G$ proves that $n_{j+3} \leq \lambda-1$. This proves Claim 2 
if $\lambda=2$. \\
To complete the proof of the Claim 2, it suffices to show that 
$n_{j+3} \neq 2$ if $\lambda=3$. Suppose to the contrary that $\lambda=3$ and 
$n_{j+3}=2$. Let 
\begin{align*}
G'=&K_{n_0} + K_{n_1} + \cdots + K_{n_j} + K_{n_{j+1}-1} + K_{n_{j+2}+1} 
       + K_{n_{j+3}} + K_{n_{j+4}} +  \cdots\\
       \quad & + K_{n_{d-2}}+ K_{n_{d-1}+1} + K_{n_d-1}, 
\end{align*}
i.e., $G'$ is obtained from $G$ by moving a vertex from $K_{n_{j+1}}$ to 
$K_{n_{j+2}}$, and moving another vertex from  $K_{n_d}$ to $K_{n_{d-1}}$. 
Clearly, $G'$ is a $3$-edge-connected, $G'$ is an $(n,m)$-graph since 
$m(G')=m(G) +n_{j+3}-n_j + n_{d-2} = m(G) -1 + n_{d-2} \geq m(G)$, and 
$\rho(G') = \rho(G)$.  
Repeating this process until there is only one vertex in $N_{j+1}$, 
we eventually obtain a $3$-edge-connected graph 
with the same order, size and remoteness as $G$, but with a greater
value of $j$.  This contradiction to the maximality of $j$ proves 
that $n_{j+2} \neq 2$, and Claim 2 follows. \\[1mm]
{\sc Claim 3:} $j=d-3$. \\
Recall that $j \leq d-3$. 
Suppose to the contrary that $j\leq d-4$. Then $n_{j+4} >0$.  By Claim 2 and 
since $G$ is $\lambda$-edge-connected, we have $n_{j+2} \geq \lambda$ and 
$n_{j+4} \geq \lambda$. Then the graph 
\[ G' = K_{n_0} + K_{n_1} + \cdots + K_{n_j} + K_{1} + K_{n_{j+2}} 
               + K_{n_{j+3}+n_{j+1}-1} + K_{n_{j+4}} +  \cdots+  K_{n_d} \]
is a $\lambda$-edge-connected $(n,m)$-graph since 
$m(G') = m(G) +(n_{j+1}-1) (n_{j+4}-n_j) \geq m(G)$,
and has greater remoteness than $G$. This contradiction to 
the choice of $G$ proves Claim 3. \\[1mm]
We are now ready to complete the proof of the lemma. By Claim 3 and Claim 2, 
we have $n_d=1$. 
First assume that $n_{d-2} \geq \lambda$. Then the graph 
\[ G'= K_{n_0} + K_{n_1} + \cdots + K_{n_{d-3}} + K_{n_{d-2}-1} + K_{n_{d-1}+2} \]             
i.e., $G'$ is obtained from $G$ by moving a vertex from $K_{n_{d-2}}$ to 
$K_{n_{d-1}}$, and moving another vertex from  $K_{n_d}$ to $K_{n_{d-1}}$. Clearly, $G'$ is a $\lambda$-edge-connected $(n,m)$-graph since $m(G') = m(G)$,
and has the same remoteness than $G$. Since $G'$ is a $\lambda$-edge-connected 
path-complete graph, the lemma holds in this case.  

Now assume that $n_{d-2} \leq \lambda-1$. 
If $\lambda =2$, then we have $n_{d-2}=1$ and thus $j \geq d-2$, a contradiction to 
Claim 3, and the lemma holds in this case. Similarly, if $\lambda=3$ and $n_{d-2}=1$, then 
we obtain $j \geq d-2$, again contradicting Claim 3. 
The only remaining case is $\lambda=3$ and $n_{d-2} = 2$. In this case, $G$ is the 
$3$-edge-connected path-complete graph $G_3^3(n,k)$ for some $k \in \mathbb{N}$. 
This proves \eqref{eq:3-edge-conn-theo-1}.

By \eqref{eq:3-edge-conn-theo-1}, there exists a $\kappa$-connected path-complete graph
$G'$ of order $n$ and size at least $m$ with $\rho(G) \leq \rho(G')$. By the 
definition of  $PK_{n,m}^{\lambda}$ we have $m(G') \geq m(PK_{n,m}^{\lambda})$. 
By Lemma \ref{la:3pc-size-vs-remoteness}(b), we thus have
$\rho(G') \leq \rho(PK_{n,m}^{\lambda})$. Hence
\[ \rho(G) \leq \rho(G') \leq \rho(PK_{n,m}^{\lambda}), \]
which proves the theorem. 

\end{proof}

In the proof of Lemma \ref{la:3pc-size-vs-remoteness}(c), the 
$\lambda$-edge-connected path-complete graphs of minimum size were 
determined. It follows from Theorem \ref{theo:3-edge-conn} and 
Lemma \ref{la:3pc-size-vs-remoteness}(b) that these graphs 
maximise the remoteness among all $\lambda$-edge-connected graphs of order $n$. 
Evaluating the remoteness of these graphs yields the following bound
on remoteness for $2$- and $3$-edge-connected graphs. The bound for
$2$-edge-connected graphs was proved originally by Plesn\'\i k \cite{Ple1984}.

\begin{corollary} \label{coro:bound-in-terms-of-order-and-lambda}
(a) {\rm \cite{Ple1984}} If $G$ is a $2$-edge-connected graph of order $n$, then
\[ \rho(G) \leq \left\{ \begin{array}{cc}
    \frac{n}{3} & \textrm{ if $n\equiv 0 \pmod{3}$ or $n \equiv 1 \pmod{3}$, } \\
    \frac{n}{3} - \frac{2}{3(n-1)} & \textrm{ if $n \equiv 2 \pmod{3}$. } 
    \end{array} \right. \] 
(b) If $G$ is a $3$-edge-connected graph of order $n$, then
\[ \rho(G) \leq \left\{ \begin{array}{cc}
    \frac{n}{4} & \textrm{ if $n\equiv 0 \pmod{4}$ or $n \equiv 1 \pmod{4}$, } \\
    \frac{n}{4} - \frac{1}{2(n-1)} & \textrm{ if $n \equiv 2 \pmod{4}$, }  \\
    \frac{n}{4} - \frac{3}{2(n-1)} & \textrm{ if $n \equiv 3 \pmod{4}$. }     
    \end{array} \right. \]  
The above bounds are sharp.           
\end{corollary}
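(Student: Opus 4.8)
The plan is to exploit the two structural facts already established so that only an explicit distance count remains. Theorem~\ref{theo:3-edge-conn}, applied with $m$ equal to the size of $G$, gives $\rho(G)\le\rho(PK^{\lambda}_{n,m})$ for every non-complete $\lambda$-edge-connected graph $G$ of order $n$; and Lemma~\ref{la:3pc-size-vs-remoteness}(b) shows that among $\lambda$-edge-connected path-complete graphs of fixed order the remoteness \emph{decreases} as the size increases. Hence $\rho(PK^{\lambda}_{n,m})$ is largest precisely when $PK^{\lambda}_{n,m}$ has minimum size, so the maximum of $\rho$ over all non-complete $\lambda$-edge-connected graphs of order $n$ is attained by the minimum-size $\lambda$-edge-connected path-complete graph of order $n$, and it suffices to evaluate the remoteness of that single graph. (Complete graphs satisfy the bound trivially, since $\rho(K_n)=1\le n/\lambda$ for the relevant $n$.) These minimum-size graphs were identified explicitly in the proof of Lemma~\ref{la:3pc-size-vs-remoteness}(c): for $\lambda=2$ they are $G_1^2(n,\tfrac{n-3}{3},2)$, $G_2^2(n,\tfrac{n-4}{3},1)$, $G_2^2(n,\tfrac{n-5}{3},2)$ according to the residue of $n$ modulo $3$, and for $\lambda=3$ the four corresponding graphs of type $G_1^3$ or $G_2^3$ according to the residue of $n$ modulo $4$.

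First I would compute the remoteness of each of these extremal graphs directly. Each is a sequential sum $K_{n_0}+K_{n_1}+\cdots+K_{n_d}$ with $n_0=1$; taking $v$ to be the vertex of the initial $K_1$, one has $d_G(v,w)=i$ exactly when $w$ lies in the $i$-th layer, so that $\sigma(v,G)=\sum_{i=1}^{d} i\,n_i$ and $\rho(G)=\sigma(v,G)/(n-1)$. That $v$ realises the remoteness follows from the same monotone-layer reasoning used in the earlier proofs, as the layer sizes do not decrease as one moves away from $v$. For the $G_1$- and $G_2$-type extremal graphs the sequence $n_0,n_1,\ldots$ alternates between $1$ and $\lambda$ apart from a short tail, so $\sigma(v,G)$ splits into an arithmetic sum over the even indices (layers of size $1$), an arithmetic sum over the odd indices (layers of size $\lambda$), and a bounded contribution from the final one or two layers. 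Writing the block parameter $k$ in terms of $n$ then collapses each expression to the claimed closed form; for example, when $\lambda=2$ and $n\equiv 0\pmod 3$ the layer sizes are $1,2,1,2,\ldots,1,2$ with $k+1=n/3$ pairs, giving $\sigma(v,G)=\tfrac{n}{3}(n-1)$ and hence $\rho(G)=n/3$.

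Sharpness is then immediate, since each extremal graph is itself a $\lambda$-edge-connected graph of order $n$ attaining the stated value. I expect the main difficulty to be bookkeeping rather than conceptual: one must treat the three residue classes modulo $3$ (for $\lambda=2$) and the four modulo $4$ (for $\lambda=3$) separately, determine the diameter $d$ and the exact sizes of the final layers in each case, and sum the arithmetic progressions without off-by-one errors. The most delicate instance is $\lambda=3$ with $n\equiv 3\pmod 4$, where the minimum-size graph is of $G_2^3$ type and the correction term $\varepsilon_3(n)$ is negative; here the tail contribution from the last layers must be tracked with particular care in order to recover the bound $\tfrac{n}{4}-\tfrac{3}{2(n-1)}$.
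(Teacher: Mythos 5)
Your proposal is correct and takes essentially the same route as the paper: the paper also obtains this corollary by combining Theorem~\ref{theo:3-edge-conn} with the monotonicity in Lemma~\ref{la:3pc-size-vs-remoteness}(b) to reduce the problem to the minimum-size $\lambda$-edge-connected path-complete graphs identified in the proof of Lemma~\ref{la:3pc-size-vs-remoteness}(c), and then evaluates their remoteness layer by layer (your sample computation for $\lambda=2$, $n\equiv 0 \pmod 3$ is right, and your separate treatment of complete graphs fills a small gap the paper leaves implicit). One minor caution: your justification that the leftmost vertex realises the remoteness (``layer sizes do not decrease as one moves away from $v$'') is not literally true here, since the layers alternate between $1$ and $\lambda$; the conclusion is nonetheless correct (and is asserted without proof in the paper as well), but it should be argued by directly comparing $\sigma(v)$ with the total distance of a vertex in the final block rather than by monotonicity of layer sizes.
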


Since evaluating the exact remoteness of $PK_{n,m}^2$ and $PK_{n,m}^3$ gives a rather 
unpleasant expression, we derive a good approximation below. Recall that the minimum
size of a $\lambda$-edge-connected path-complete graph of order $n$ is 
$\frac{5}{3}n - \varepsilon_2(n)$ for $\lambda=2$ and 
$\frac{9}{4}n -\varepsilon_3(n)$ for $\lambda=3$. For graphs of smaller size, the 
bound in Corollary \ref{coro:bound-in-terms-of-order-and-lambda} can likely not be
improved, even if size is taken into account. 

\begin{proposition}
(a) Let $n,m \in \mathbb{N}$ with $\frac{5}{3}n - \varepsilon_2(n) \leq m < \binom{n}{2}$, 
where $\varepsilon_2$ is as defined in Lemma \ref{la:3pc-size-vs-remoteness}(c). 
Then 
\[ \rho(PK^2_{n,m}) = \frac{n}{3} - \frac{2m}{3(n-1)} + \varepsilon. \]
for some $\varepsilon \in \mathbb{R}$ with
$ \frac{2}{3} <\varepsilon < \frac{5}{3}$. \\
(b) Let $n,m \in \mathbb{N}$ with $\frac{9}{4}n -\varepsilon_3(n) \leq m < \binom{n}{2}$, 
where $\varepsilon_3$ is as defined in Lemma \ref{la:3pc-size-vs-remoteness}(c).
Then 
\[ \rho(PK^3_{n,m}) = \frac{n}{4} - \frac{m}{2(n-1)} + \varepsilon. \]
for some $\varepsilon \in \mathbb{R}$ with
$ 1 <\varepsilon < \frac{3}{2}$. \\
\end{proposition}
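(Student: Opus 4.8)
The plan is to compute $\rho(PK^\lambda_{n,m})$ almost exactly and then read off the correction term $\varepsilon$. Write $H=PK^\lambda_{n,m}$. By Definition \ref{deflambda}, $H$ is one of $G_1^\lambda(n,k,b)$, $G_2^\lambda(n,k,b)$ or $G_3^3(n,k)$, so in every case $H=K_{n_0}+K_{n_1}+\cdots+K_{n_d}$, where the block sizes $n_0,n_1,\ldots$ start with the alternating pattern $1,\lambda,1,\lambda,\ldots$ of length $2k$ and finish with the ``tail'' cliques $K_a,K_b$ (together with an extra $K_2$ in the $G_3^3$ case). Let $v$ be the vertex of the initial $K_1$, which, as in Theorem \ref{theo:3-edge-conn}, realises $\rho(H)$. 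For such a segmented graph the two quantities I need are elementary sums over the blocks,
\[ \sigma(v,H)=\sum_{i=0}^{d} i\,n_i, \qquad m(H)=\sum_{i=0}^{d}\binom{n_i}{2}+\sum_{i=0}^{d-1} n_i n_{i+1}, \]
and I would evaluate both for each of the finitely many shapes of $H$, obtaining $\sigma(v,H)$ and the true size $m^*:=m(H)$ as explicit quadratic polynomials in the parameters $k,a,b$, subject to the order relation $n=k(\lambda+1)+a+b+c$ with $c\in\{0,1,3\}$ depending on the shape.

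The crux is a cancellation. Writing the main term uniformly as $\frac{n}{\lambda+1}-\frac{2m}{(\lambda+1)(n-1)}$ (which specialises to $\frac n3-\frac{2m}{3(n-1)}$ for $\lambda=2$ and to $\frac n4-\frac{m}{2(n-1)}$ for $\lambda=3$), I would form
\[ \Delta:=\sigma(v,H)+\frac{2m^*}{\lambda+1}-\frac{n(n-1)}{\lambda+1} \]
and verify that every quadratic and every cross term in $k,a,b$ cancels, leaving $\Delta$ as a degree-one polynomial in the parameters. Hence, with $\varepsilon_0:=\rho(H)-\frac{n}{\lambda+1}+\frac{2m^*}{(\lambda+1)(n-1)}=\frac{\Delta}{n-1}$, the quantity $\varepsilon_0$ is a ratio of two linear forms; substituting $a=n-k(\lambda+1)-b-c$ and using $k\ge 0$, $a,b\ge 1$, I would bound it strictly inside the stated interval by comparing its numerator with the corresponding multiple of $n-1$. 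For example, for $G_1^3$ one gets $\Delta-(n-1)=\tfrac12(k+a-1)$, so $\varepsilon_0=1+\frac{k+a-1}{2(n-1)}$, which lies strictly between $1$ and $\frac32$.

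It remains to replace the true size $m^*$ by the prescribed $m$. By Lemma \ref{la:3pc-size-vs-remoteness}(c), $H$ has minimum size among $\lambda$-edge-connected path-complete graphs of size at least $m$, so $0\le m^*-m\le\lambda-1$, and
\[ \varepsilon=\rho(H)-\frac{n}{\lambda+1}+\frac{2m}{(\lambda+1)(n-1)}=\varepsilon_0-\frac{2(m^*-m)}{(\lambda+1)(n-1)}. \]
The upper bound is then automatic, since $m^*\ge m$ makes the correction non-positive, whence $\varepsilon\le\varepsilon_0<\frac53$ (resp. $<\frac32$). The lower bound is the genuine obstacle, because the correction decreases $\varepsilon$ while, for $\lambda=3$, the margin $\varepsilon_0-1$ can be as small as $\frac{1}{2(n-1)}$, attained exactly by the extreme shapes with $k+a$ minimal, such as $G_1^3(n,1,n-5)$.

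The key fact I would need is that these extreme shapes are precisely the ones whose size-gap below $m^*$ is smallest. Concretely, I would show shape by shape that the realisable sizes immediately below $m^*$ (for instance those of $G_2^3(n,1,1)$ and $G_2^3(n,1,2)$, which fill sizes $m^*-1$ and $m^*-2$ beneath $G_1^3(n,1,n-5)$) leave no exploitable gap, so that $m^*-m$ is forced to be small enough that the decisive inequality $\frac{2(m^*-m)}{\lambda+1}<\Delta-(n-1)$ (equivalently $\varepsilon>1$) holds; in the displayed extreme case it reduces to $m^*-m<k+a-1$, and $m^*=m$ there. For $\lambda=2$ this step is painless, since $\varepsilon_0-\frac23\ge\frac{n}{3(n-1)}$ dwarfs the correction $\le\frac{2}{3(n-1)}$, so the whole difficulty concentrates in the finitely many small-parameter shapes for $\lambda=3$, which can be checked by hand.
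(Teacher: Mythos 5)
Your computation is essentially the one the paper performs: take $v$ in the leading $K_1$, evaluate $\sigma(v,H)$ and $m(H)$ block by block for each admissible shape, observe that all quadratic terms cancel, and bound the linear remainder. The paper does this only for $\lambda=2$ (obtaining $\varepsilon=\frac43-\frac{2k+b}{3(n-1)}$ and $\varepsilon=\frac53-\frac{9k+b+2}{3(n-1)}$ for the two shapes), assumes the size of $PK^2_{n,m}$ is exactly $m$ with the remark that the case of size $m+1$ is ``almost identical'', and omits $\lambda=3$ entirely as ``similar''. What you add, and this is the genuinely valuable difference, is an honest treatment of the discrepancy between the prescribed $m$ and the true size $m^*$ of $PK^\lambda_{n,m}$. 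Your identity $\varepsilon=\varepsilon_0-\frac{2(m^*-m)}{(\lambda+1)(n-1)}$ with $0\le m^*-m\le\lambda-1$ (Lemma \ref{la:3pc-size-vs-remoteness}(c)) makes the upper bound immediate, and you correctly locate the only real danger: for $\lambda=3$ the margin $\varepsilon_0-1$ can be as small as $\frac{1}{2(n-1)}$ (your formula $\Delta-(n-1)=\frac12(k+a-1)$ for $G_1^3(n,k,b)$ checks out), so a gap $m^*-m\ge1$ at such a shape would sink the lower bound. Your resolution is also correct: $G_2^3(n,k,1)$ has size exactly $m(G_1^3(n,k,n-4k-1))-1$, so minimality of $PK^3_{n,m}$ forces $m^*=m$ at the extreme shape; and when $G_2^3(n,k,1)$ fails to exist one has $n=4k+4$, where $m^*$ is the global minimum size and the hypothesis itself forces $m=m^*$. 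This case analysis is precisely what the paper's omitted part (b) needs, so in this respect your outline is more rigorous than the published proof; the price is a longer, shape-by-shape verification that the paper avoids by not engaging with the issue.

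One quantitative claim needs repair. For $\lambda=2$ you assert $\varepsilon_0-\frac23\ge\frac{n}{3(n-1)}$. That holds for the shape $[K_1+K_2]^k+K_a+K_b$, but it is false for $[K_1+K_2]^k+K_1+K_a+K_b$: there $\varepsilon_0-\frac23=\frac{3a+2b-2}{3(n-1)}$, and with $a=2$, $b=1$, $k\ge1$ the numerator equals $6<n$. The step survives because $a\ge2$, $b\ge1$ give $3a+2b-2\ge 6$, hence $\varepsilon_0-\frac23\ge\frac{2}{n-1}$, which still strictly exceeds the maximal correction $\frac{2}{3(n-1)}$; state the bound in that weaker, sufficient form.
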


{\bf Proof:} 
(a) Recall that $PK^{2}_{n,m}$ equals either $[K_1 + K_2]^k + K_a+K_b$ for some 
$k,a,b \in \mathbb{N}$ or 
$[K_1 + K_2]^k + K_1 + K_a+K_b$ for some $k \in \mathbb{N}\cup \{0\}$, $a,b \in \mathbb{N}$. 
We assume that the size of $PK_{n,m}^2$ is $m$; the case that its size is $m+1$
is almost identical. 
Let $v$ be the vertex that realises the remoteness, i.e., the 
vertex in the leftmost $K_1$, and let  $H := PK_{n,m}-V(K_a \cup K_b)$. Define 
$\varepsilon :=
\rho(PK^2_{n,m}) - \big( \frac{n}{3} - \frac{2m}{3(n-1)} \big)$.
It suffices to show that 
\begin{equation} \label{eq:bound-on-epsilon-for-lambda=2}
\frac{2}{3} < \varepsilon  < \frac{5}{3}.
 \end{equation} 
First consider the case that $PK^2_{n,m} = [K_1 + K_2]^k + K_a+K_b$ where $k \in \mathbb{N}$.  
Then $|V(H)|=3k$. Simple calculations show that  $\sigma_H(v) =3k^2-k$, and 
$m(H) = 5k -2$. Also $a+b= n-3k$. Hence
\[   \sigma(v, PK^2_{n,m})
 = \sigma_H(v) + 2k(a+b)+b 
                    = 3k^2-k +2k(n-3k) + b,
\]
and
\[ m(PK^2_{n,m})  =  m(H) + \binom{a+b+2}{2} -2b-1 
            = 5k-2 + \frac{1}{2} (n-3k+2)(n-3k+1) - 2b-1.
\] 
Since $\rho(PK^2_{n,m}) = \frac{\sigma(v, PK^2_{n,m})}{n-1}$ we have
\begin{eqnarray*} 
\varepsilon & = & \frac{ 3k^2-k +2k(n-3k) + b}{n-1} - \frac{n}{3} \\
    &  &  + \frac{10k-4 + (n-3k+2)(n-3k+1) - 4b-2}{3(n-1)} 
        \\
    & = & \frac{4}{3} - \frac{2k+b}{3(n-1)}.
\end{eqnarray*}      
Since $n=3k+a+b$ we have $0<2k+b < n-1$, and thus $1 < \varepsilon < \frac{4}{3}$.
and \eqref{eq:bound-on-epsilon-for-lambda=2} holds.  

Now consider the case that $PK^2_{n,m} = [K_1 + K_2]^k + K_1 + K_a+K_b$ where 
$k \in \mathbb{N}\cup \{0\}$.  
Then $|V(H)|=3k+1$,  $\sigma_H(v) =3k^2+k$, and 
$m(H) = 5k$. Also $a+b= n-3k-1$. Calculations similar to the above yield that
\[ 
\varepsilon  =  \frac{5}{3} - \frac{9k+b+2}{3(n-1)}.
\]
Since $0< 9k+b+2 <3(n-1)$,  we have $\frac{2}{3} < \varepsilon< \frac{5}{3}$,
and \eqref{eq:bound-on-epsilon-for-lambda=2} follows. \\[1mm]
(b) The proof of (b) is similar to (a) and thus omitted. \hfill $\Box$ 

\begin{corollary}
(a) Let $G$ be a $2$-edge-connected graph of order $n$ and size $m$. 
Then
\[ \rho(G) \leq \left\{ \begin{array}{cc}
   \frac{n}{3} & \textrm{if $m <\lceil \frac{5}{3}n \rceil -2$,} \\
   \frac{n}{3} - \frac{2m}{3(n-1)} +\frac{5}{3} 
         & \textrm{if $m  \geq \lceil \frac{5}{3}n \rceil -2$},
      \end{array} \right. \]
and this bound is sharp apart from an additive constant. \\
(b) Let $G$ be a $3$-edge-connected graph of order $n$ and size $m$. 
Then
\[ \rho(G) \leq \left\{ \begin{array}{cc}
   \frac{n}{4} & \textrm{if $m < \lceil \frac{9}{4}n\rceil -2$,} \\
   \frac{n}{4} - \frac{m}{2(n-1)} + \frac{3}{2} 
        & \textrm{if $m \geq \lceil \frac{9}{4}n\rceil -2$,}
      \end{array} \right. \]
and this bound is sharp apart from an additive constant. 
\end{corollary}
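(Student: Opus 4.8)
The plan is to reduce both parts to Theorem~\ref{theo:3-edge-conn} and to the preceding Proposition, handling the two size regimes separately. I give the argument for $\lambda=2$; part (b) is identical after replacing $\frac{n}{3}$, $\frac{2m}{3(n-1)}$ and $\frac{5}{3}$ by $\frac{n}{4}$, $\frac{m}{2(n-1)}$ and $\frac{3}{2}$, and replacing Corollary~\ref{coro:bound-in-terms-of-order-and-lambda}(a) and part (a) of the Proposition by their $\lambda=3$ analogues. For the first regime, $m < \lceil \frac{5}{3}n\rceil - 2$, I would simply invoke the order-only bound of Corollary~\ref{coro:bound-in-terms-of-order-and-lambda}(a), which gives $\rho(G) \leq \frac{n}{3}$ for every $2$-edge-connected graph of order $n$ irrespective of its size; this is precisely the claimed bound in this regime.

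For the second regime, $m \geq \lceil \frac{5}{3}n\rceil - 2$, the idea is to chain the two available results. If $G$ is complete then $\rho(G)=1$ and the bound holds trivially, so I may assume $G$ is not complete. Theorem~\ref{theo:3-edge-conn} then yields $\rho(G) \leq \rho(PK^2_{n,m})$, and part (a) of the Proposition gives $\rho(PK^2_{n,m}) = \frac{n}{3} - \frac{2m}{3(n-1)} + \varepsilon$ with $\varepsilon < \frac{5}{3}$. Combining these gives $\rho(G) < \frac{n}{3} - \frac{2m}{3(n-1)} + \frac{5}{3}$, as required.

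The main obstacle, and the only place requiring genuine care, is aligning the threshold $\lceil \frac{5}{3}n\rceil - 2$ of the corollary with the hypothesis $m \geq \frac{5}{3}n - \varepsilon_2(n)$ of the Proposition, where $\frac{5}{3}n - \varepsilon_2(n)$ is the minimum size $m_{\min}$ of a $2$-edge-connected path-complete graph. A short residue-class computation shows these thresholds coincide when $n \equiv 0,1 \pmod{3}$, but that for $n \equiv 2 \pmod{3}$ the corollary's threshold falls one short of $m_{\min}$. The plan is to absorb this small gap as follows: when $m$ lies strictly below $m_{\min}$, the graph $PK^2_{n,m}$ is by definition the unique minimum-size $2$-edge-connected path-complete graph, so $PK^2_{n,m}=PK^2_{n,m_{\min}}$; applying the Proposition at $m_{\min}$ and using $m_{\min} \geq m$ to weaken $-\frac{2m_{\min}}{3(n-1)}$ to $-\frac{2m}{3(n-1)}$ recovers the stated bound. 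Verifying this alignment across all residues, together with the analogous count modulo $4$ for $\lambda=3$, is the bookkeeping that must be checked; everything else is a direct substitution.

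Finally, for sharpness up to an additive constant I would exhibit $PK^2_{n,m}$ (respectively $PK^3_{n,m}$) itself, a $2$-edge-connected (resp.\ $3$-edge-connected) graph of order $n$ and size at most $m+1$ whose remoteness, by the Proposition, equals $\frac{n}{3} - \frac{2m}{3(n-1)} + \varepsilon$ with $\varepsilon > \frac{2}{3}$. Since the upper bound uses $\varepsilon = \frac{5}{3}$, the difference between the bound and the attained value is less than $\frac{5}{3} - \frac{2}{3} = 1$, an additive constant independent of $n$ and $m$; the corresponding gap for $\lambda=3$ is less than $\frac{3}{2} - 1 = \frac{1}{2}$. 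This establishes sharpness in the asserted sense.
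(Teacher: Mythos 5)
Your proposal is correct and follows exactly the derivation the paper intends (the paper states this corollary without proof, as an immediate consequence of Theorem~\ref{theo:3-edge-conn}, Corollary~\ref{coro:bound-in-terms-of-order-and-lambda} and the preceding Proposition): order-only bound in the small-size regime, the chain $\rho(G)\leq\rho(PK^{\lambda}_{n,m})=\frac{n}{2(\lambda+1)\slash 2}-\dots+\varepsilon$ in the large-size regime, and $PK^{\lambda}_{n,m}$ itself for sharpness. Your extra bookkeeping is also right: the only threshold mismatches are $m_{\min}-1$ for $n\equiv 2\pmod 3$ (and, for $\lambda=3$, up to $m_{\min}-4$ when $n\equiv 3\pmod 4$), and your fix via $PK^{\lambda}_{n,m}=PK^{\lambda}_{n,m_{\min}}$ together with monotonicity of $-m/(n-1)$ handles all of these uniformly.
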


We note that the results in this section cannot easily be extended to values of 
$\lambda$ with $\lambda \geq 4$. For example for $\lambda=4$, graphs of the form
$[K_1 + K_4]^{\ell_1} + K_{a_1}+K_{b_1}$ do not necessarily have maximum remoteness for graphs
of their order and size. Indeed, if such a graph has order $n$ and size $m$, then
their remoteness is $\frac{n}{5} - \frac{2m}{5(n-1)} + O(1)$, while 
a graph of the form $K_1+ K_4 + +[K_2]^{\ell_2} + K_{a_2} + K_{b_2}$ of order $n$ and size $m$
has remoteness $\frac{n}{4} - \frac{m}{2(n-1)} + O(1)$. 
For $m = cn(n-1)$, where $0<c<\frac{1}{2}$ the former graph has remoteness 
$\frac{(1-2c)n}{5}+O(1)$, while the latter has remoteness $\frac{(1-2c)n}{4}+O(1)$.

\section{Maximum remoteness of a triangle-free graph with given order and size}\label{bipartite}

In this section we show that the bounds on the remoteness of graph order $n$ and size $m$ 
in Theorem \ref{EntJacSny1976}  can be improved significantly for triangle-free graphs.

 \begin{theorem}\label{th:rem-bi-G}
Let $ G $ be a connected, triangle-free graph of order $ n $ and size $ m $. Then 
\[ \rho(G) \leq \frac{n}{2} + 2 - \frac{2m}{n-1}. \]
\end{theorem}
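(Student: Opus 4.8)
The plan is to fix a vertex $v$ with $\overline{\sigma}_G(v)=\rho(G)$, work with the BFS layers $N_0,\dots,N_d$ about $v$ (so $n_0=1$, $\sum_{i=0}^{d}n_i=n$, $\sigma_G(v)=\sum_{i=1}^{d}i\,n_i$), and reduce the claim to a single inequality. Since $\rho(G)=\sigma_G(v)/(n-1)$, the asserted bound is, after multiplying by $n-1$ and using $(n-1)\tfrac n2=\binom n2$, equivalent to
\[ \sigma_G(v)+2m \;\le\; \binom{n}{2}+2(n-1). \]
I would model the argument on the proof of Theorem \ref{EntJacSny1976}, which in this language is very short: list the vertices in BFS order $v=x_0,x_1,\dots,x_{n-1}$ (so $d(v,x_0)\le d(v,x_1)\le\cdots$) and let $r_i$ be the number of neighbours of $x_i$ among $x_0,\dots,x_{i-1}$, so that $\sum_i r_i=m$. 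Writing $d_i=d(v,x_i)$, the $d_i-1$ vertices lying in $N_0,\dots,N_{d_i-2}$ all precede $x_i$ and are non-adjacent to it, whence $r_i\le i-(d_i-1)$ and thus the pointwise estimate $d_i+r_i\le i+1$; summing recovers Theorem \ref{EntJacSny1976}. The triangle-free analogue I am aiming for is the aggregate strengthening in which the coefficient of $m$ doubles.

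I would recast the target so that the extra factor on $m$ becomes a statement about non-edges. Call a pair $\{x,y\}$ \emph{close} if $|d(v,x)-d(v,y)|\le 1$ and \emph{far} otherwise, and let $P,F$ denote their numbers, so $\binom n2=P+F$. Every edge is a close pair, and every far pair is a non-edge. The far pairs already absorb the distance term: a vertex at distance $i$ is non-adjacent to each of the $\ge i-1$ vertices of $N_0,\dots,N_{i-2}$, so $F\ge\sum_{i\ge 2}(i-1)n_i=\sigma_G(v)-(n-1)$, i.e.\ $\sigma_G(v)\le F+(n-1)$ (this step is identical to the EJS counting and needs no hypothesis on $G$). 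Consequently the whole theorem reduces to the single inequality
\[ 2m \;\le\; P+(n-1), \qquad\text{equivalently}\qquad \#\{\text{close non-edges}\}\;\ge\; m-(n-1), \]
since adding $\sigma_G(v)\le F+(n-1)$ to $2m\le P+(n-1)$ and using $F+P=\binom n2$ yields the displayed bound.

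The remaining task is therefore to exhibit $m-(n-1)$ distinct non-edges among the close pairs (those inside one layer or between consecutive layers), and this is exactly where triangle-freeness is used, through the fact that every neighbourhood is independent. Fixing a BFS spanning tree with parent map $\pi$, the number of non-tree edges equals $m-(n-1)$, and each non-tree edge forces a companion close non-edge: for a within-layer edge $\{x,y\}\subseteq N_i$ the parents $\pi(x),\pi(y)\in N_{i-1}$ are \emph{distinct} (else $\pi(x)xy$ is a triangle) and $\{\pi(x),y\}$, $\{\pi(y),x\}$ are both close non-edges; for a non-tree edge $\{x,y\}$ with $x\in N_{i-1}$, $y\in N_i$ the pair $\{x,\pi(y)\}\subseteq N_{i-1}$ is a close non-edge because $x$ and $\pi(y)$ have the common neighbour $y$. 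So it suffices to assign the non-tree edges distinct close non-edges drawn from these witnesses.

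I expect the injectivity of this assignment to be the main obstacle. The naive witness maps collide exactly at book-like configurations (several vertices sharing a common pair of neighbours), and no waste is permitted, since $2m\le P+(n-1)$ is \emph{tight}: equality holds for the complete bipartite graphs $K_{a,a}$. Worse, the inequality fails at the level of a single vertex or a single layer — in $K_{a,a}$ the neighbours of $v$ are strictly under budget while the opposite side is over budget, and the surplus of the former exactly pays for the latter — which rules out any purely pointwise version of the EJS estimate and forces a global accounting. I would try to prove $2m\le P+(n-1)$ by induction on the number of layers, peeling off $N_d$ and carrying a \emph{strengthened} hypothesis that records the accumulated surplus $P+(n-1)-2m$ generated by the lower layers, large enough to absorb the possibly negative contribution $\binom{n_d}{2}+n_d+n_{d-1}n_d-2(e^{\mathrm{in}}_d+e^{\mathrm{cross}}_d)$ of the last layer, where $e^{\mathrm{in}}_d$ and $e^{\mathrm{cross}}_d$ count the edges inside $N_d$ and between $N_{d-1}$ and $N_d$. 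A viable alternative is a Hall-type matching from the non-tree edges into the close non-edges, with the triangle-free witnesses above supplying the bipartite adjacencies and the independence of neighbourhoods verifying the defect condition. In either route, converting the purely local triangle-free witnesses into a globally injective correspondence is the crux of the proof.
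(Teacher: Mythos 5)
Your reduction is correct as far as it goes: the stated bound is equivalent to $\sigma_G(v)+2m\le\binom{n}{2}+2(n-1)$ for every vertex $v$, the far-pair estimate $F\ge\sum_{i\ge 2}(i-1)n_i=\sigma_G(v)-(n-1)$ is valid, and your local witness observations are genuine consequences of triangle-freeness (distinct parents for the endpoints of a within-layer edge, and the within-layer non-edge $\{x,\pi(y)\}$ for a cross non-tree edge $\{x,y\}$). But the proposal does not prove the theorem, because everything has been staked on the single inequality $2m\le P+(n-1)$ --- equivalently, that the number of close non-edges is at least $m-(n-1)$ --- and that inequality is never established. You identify the obstruction yourself, and accurately: the witness map from non-tree edges to close non-edges is not injective (book configurations collide); the inequality is tight for $K_{a,a}$, so no witness may be wasted; and no per-layer version can work, since in $K_{a,a}$ the surplus of $N_1$ must pay exactly for the deficit of $N_2$. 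The two repair strategies you name --- peeling off $N_d$ with a strengthened inductive hypothesis carrying the accumulated surplus, or a Hall-type matching with a defect verification --- are only gestured at, not carried out, and neither is routine: the defect condition in the matching version \emph{is} the global accounting you are missing. As written, this is a correct reduction of the theorem to an unproven (if plausible) combinatorial lemma, not a proof.

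For contrast, the paper avoids any global counting by inducting on $m$ instead of analysing one fixed BFS partition. If $m>n-1$, it picks a vertex $u$ on a cycle with $d_G(v,u)$ minimum over all cycle vertices, takes a neighbour $w$ of $u$ on that cycle, and deletes the edge $uw$. The minimality of $d_G(v,u)$ forces every shortest $(v,w)$-path in $G-uw$ to pass through $u$, and triangle-freeness forces the $(u,w)$-section of such a path to have length at least $3$; hence $d_{G-uw}(v,w)\ge d_G(v,w)+2$, while no other distance from $v$ can decrease upon deleting an edge. Therefore $\sigma(v,G)\le\sigma(v,G-uw)-2$, and induction down to the tree case (settled by Theorem \ref{Zel1968AouHan2011}) yields $\sigma(v,G)\le\frac{(n+4)(n-1)}{2}-2m$ directly. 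In effect, each edge beyond a spanning tree pays its $-2$ at the moment it is deleted, which is precisely the bookkeeping your static double count struggles to perform globally. If you want to salvage your route, one option is to attempt your lemma $2m\le P+(n-1)$ by a similar edge-deletion induction, but beware that deleting an edge changes the BFS layers, so $P$ does not behave monotonically and that step is not immediate either.
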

\begin{proof}
Let $ G $ be a triangle-free graph of order $ n $ and size $ m $, where
$n-1 \leq m \leq \lfloor \frac{n^2}{4} \rfloor$. 
It suffices to show that, for every $v \in V(G)$,
\begin{equation} \label{eq:bipartite-bound} 
\sigma(v,G) \leq \; \dfrac{(n+4)(n-1)}{2}-2m. 
\end{equation}
We proceed by induction on $m$. For $m=n-1$ the bound in \ref{eq:bipartite-bound} becomes
\[ \sigma(v,G) 
   \leq \frac{(n+4)(n-1)}{2} - 2(n-1) 
   = \frac{n(n-1)}{2}, \]
which holds by Theorem \ref{Zel1968AouHan2011}. 

Now assume that $m>n-1$. Then $G$ is not a tree, that is, $G$ contains a cycle. Let $u$ be an arbitrary vertex on a cycle $C$ such that $d(v,u)$ is minimum among all $u$ lying on a cycle 
($u$ can be $v$). Let $w$ be a vertex of $C$ adjacent to $u$. Consider the graph $G'=G-uw$.
We prove that
\begin{equation} \label{eq:bipartite-1}
d_{G'}(v,w) \geq d_{G}(v,w) +2. 
\end{equation}
Let 
$P_1$ be a shortest $(v,w)$ path in $G'$, and let $P_2: x_0, x_1,\ldots,x_j$ be a shortest
$(v,u)$-path in $G$, where $x_0=v$, $x_j=u$ and $d_G(v,u)=j$. We show that $u$ is on $P_1$. 
Indeed, $x_0$ is on $P_1$, and we let $i$ be the largest value for which $x_i$ is on $P_1$. 
Since $x_i$ is on a cycle, it follows by our choice of $u$ that $x_i=u$, i.e., $i=j$. Hence
$u$ is on $P_1$. 

Now the $(u,w)$-section of $P_1$ has more than two edges, since otherwise, if it had only two
edges, then these two edges together with $uw$ would form a triangle in $G$, a contradiction 
to $G$ being triangle-free. Hence $d_{G'}(v,w) \geq d_{G}(v,u) + 3$.  It is easy to see 
that $d_{G'}(v,u) = d_{G}(v,u)$. Also $d_{G}(v,u) \leq d_G(v,w)-1$, and 
\eqref{eq:bipartite-1} follows.

The graph $G'$ is, connected, triangle-free and has $m-1$ edges. Since 
$d_G(v,x) \leq d_{G'}(v,x)$ for $x \in V(G)-\{w\}$, and $d_{G}(v,w) \leq d_{G'}(v,w)-2$,
we have $\sigma(v,G) \leq \sigma(v,G') - 2$.  
Applying the induction hypothesis to $G'$ we obtain that 
\[\sigma(v,G) \leq \sigma(v,G')-2 \leq \frac{(n+4)(n-1)}{2}-2(m-1) -2 
            = \frac{(n+4)(n-1)}{2} - 2m, \]
as desired. 
\end{proof}

We now show that bound in Theorem \ref{th:rem-bi-G} is sharp, even for bipartite graphs.

\begin{example}
Let $n,m \in \mathbb{N}$ be given and let $t \in \mathbb{N}$ be the smallest value for which
$m-n \leq \left \lfloor \dfrac{t^2}{4} \right \rfloor-t-1$. 
We define $f(t)=\left \lfloor \dfrac{t^2}{4} \right \rfloor -t -(m-n)$. Then clearly  
$1 \leq f(t) \leq \left \lfloor \dfrac{t}{2} \right \rfloor-1$. Define 
$BPK_{n,m}=[K_1]^{n-t+1}+\overline{K}_a+\overline{K}_b+\overline{K}_c$, where 
$a=\left \lfloor \dfrac{t}{2} \right \rfloor-f(t)$, $b=\left \lceil \dfrac{t}{2} \right \rceil-1$ 
and $c=f(t)$. \\
	
Clearly, $BPK_{n,m}$ is a bipartite graph with $n$ vertices and $n-t+\left \lfloor \dfrac{t^2}{4} \right \rfloor-f(t)=m$ edges. Let $v$ be the vertex that realises the remoteness, i.e., the vertex in the leftmost $K_1$.

\begin{align}\label{Eqn1}
\sigma(v,BPK_{n,m})=&\sum_{i=0}^{n-t} i +(n-t+1)\left(\left \lfloor \dfrac{t}{2} \right \rfloor-f(t)\right)+(n-t+2)\left(\left \lceil \dfrac{t}{2} \right \rceil-1\right)\nonumber\\
\quad & \;+(n-t+3)(f(t)).
\end{align}

Simplifying Equation (\ref{Eqn1}) and substituting the value of $f(t)$ we get that 

 \begin{align*}
\sigma(v,BPK_{n,m}) =& \; \dfrac{(n+4)(n-1)}{2}-2m-\dfrac{t^2}{2}+\left\lfloor \dfrac{t}{2} \right\rfloor+2\left\lceil \dfrac{t}{2} \right\rceil-\dfrac{3t}{2}+2\left\lfloor \dfrac{t^2}{4} \right\rfloor \\
 =& \; \dfrac{(n+4)(n-1)}{2}-2m.
  \end{align*}

\noindent Hence $BPK_{n,m}$ attains the upper bounds in Theorem \ref{th:rem-bi-G}. 
\end{example}

\end{document}